\renewcommand{\eqref}[1]{(\ref{#1})}
\newtheorem{prop}{Proposition}[section]
\newtheorem{lem}[prop]{Lemma}
\newtheorem{thm}{Theorem}[section]
\begin{document}

\title[Invariant geodesics under pseudo-Anosov mapping classes]{Invariant geodesics in the curve complex under point-pushing pseudo-Anosov \\ mapping classes}
\author[C. Zhang]{C. Zhang}
\date{March 16, 2013}
\thanks{ }

\address{Department of Mathematics \\ Morehouse College
\\ Atlanta, GA 30314, USA.}
\email{czhang@morehouse.edu}

\subjclass{Primary 32G15; Secondary 30F60}
\keywords{Riemann surfaces, pseudo-Anosov, Dehn twists, curve complex, invariant geodesics,
filling curves.}

\maketitle 

\begin{abstract}
Let $S$ be a closed Riemann surface of genus $p>1$ with one point removed. In this paper, we identify those point-pushing pseudo-Anosov maps on $S$ that preserve at least one bi-infinite geodesic in the curve complex. 
\end{abstract}

\bigskip
\bigskip

\section{Introduction and statement of results}
\setcounter{equation}{0}

Let $S$ be a closed Riemann surface of genus $p>1$ with $n$ punctures removed. Assume that $3p-4+n>0$. Let Mod$(S)$ denote the mapping class group which consists of isotopy classes of orientation preserving self-homeomorphisms of $S$.  In view of the Nielsen--Thurston classification theorem \cite{Th}, elements of Mod$(S)$ are represented by periodic, reducible, or pseudo-Anosov maps. See Fathi--Laudenbach--Poenaru \cite{FLP} for the definitions and more information on reducible and pseudo-Anosov maps.  

The mapping class group Mod$(S)$ can naturally act on the Teichm\"{u}ller space $T(S)$ as a group of isometries with respect to the Teichm\"{u}ller metric $d_T$. Royden's theorem \cite{Ro}, whose generalization is due to Earle--Kra \cite{E-K}, asserts that with a few exceptions, the group of automorphisms of $T(S)$ is the group Mod$(S)$. Following Bers \cite{Bers2} elements $\alpha\in \mbox{Mod}(S)$ can be classified as elliptic, parabolic, hyperbolic, or pseudo-hyperbolic elements with the aid of the index $a(\alpha)=\inf \{d_T(y,\alpha(y)): y\in T(S)\}$. That is, $\alpha$ is elliptic if there is $y_0\in T(S)$ such that  $a(\alpha)=d_T(y_0,\alpha(y_0))=0$; parabolic if $a(\alpha)=0$ but $d_T(y,\alpha(y))>0$ for all $y\in T(S)$; hyperbolic if there is $y_0\in T(S)$ such that  $a(\alpha)=d_T(y_0,\alpha(y_0))>0$; and pseudo-hyperbolic if $a(\alpha)>0$ and for all $y\in T(S)$, $a(\alpha)<d_T(y,\alpha(y))$. 

Bers \cite{Bers2} proved that an element $\alpha\in \mbox{Mod}(S)$ is elliptic if and only if it is represented by a periodic map; $\alpha$ is parabolic or pseudo-hyperbolic if and only if it is represented by a reducible map; and $\alpha$ is hyperbolic if and only if it is represented by a pseudo-Anosov map. Among other things, it is well known that any hyperbolic element $\alpha$ preserves a unique bi-infinite geodesic $l$ in $T(S)$ (called Teichm\"{u}ller geodesics in the literature), and hyperbolic elements are the only elements that keep some bi-infinite geodesics invariant. We remark here that the existence of $l$ was proved by Bers \cite{Bers2}; and the uniqueness of $l$ was proved in Bestvina--Feighn \cite{B-F} using topological methods.  

The mapping class group Mod$(S)$ acts on the {\it complex of curves} $\mathcal{C}(S)$ of $S$ as well, where $\mathcal{C}(S)$ is the simplicial complex whose vertex set $\mathcal{C}_0(S)$ is the collection of simple closed geodesics on $S$ and whose $k$-dimensional simplicies $\mathcal{C}_k(S)$ are the collections of $(k+1)$-tuples $(v_0,v_1,\cdots, v_k)$ of disjoint simple closed geodesics on $S$ (see Harvey \cite{H}). It is well-known that $\mathcal{C}(S)$ is connected and locally infinite. 
For simplicity, any path $\{(u, u_1), (u_1,u_2), \cdots, (u_s,v)\}$ joining two vertices $u,v\in \mathcal{C}_0(S)$ is denoted by $[u,u_1,\cdots, u_s,v]$. It is natural to define a path distance $d_{\mathcal{C}}(u,v)$ for any $u,v\in \mathcal{C}_0(S)$ to be the minimum number of sides in $\mathcal{C}_1(S)$ joining $u$ and $v$, where one of the paths that achieves the minimum length is called a geodesic segment joining $u$ and $v$. Masur--Minsky \cite{M-M} showed that $\mathcal{C}(S)$ has an infinite diameter and is $\delta$-hyperbolic in the sense of Gromov \cite{Gro}. 

When considering actions of elements of Mod$(S)$ on $\mathcal{C}(S)$, things are similar but different than that on $T(S)$. Ivanov \cite{Iva} showed that with a few exceptions, the group of automorphisms of $\mathcal{C}(S)$ is the full group Mod$(S)$. It was shown in \cite{M-M} that elements of Mod$(S)$ can be classified as elliptic and hyperbolic elements (see also \cite{Gro} for the definition and terminology). In particular, Mod$(S)$ contains no parabolic elements and hyperbolic elements are represented by pseudo-Anosov maps. 

In \cite{Bow}, Bowditch proved that there exists an integer $m$, whose precise value is unknown, such that for any hyperbolic mapping class $f$, the power $f^m$ preserves finitely many bi-infinite geodesics in $\mathcal{C}(S)$, where an infinite path $[\cdots, u_{-m}, \cdots, u_0,\cdots, u_m, \cdots]$ is called a bi-infinite geodesic if $u_{-m}$ and $u_m$ both tend to points in the Gromov boundary $\partial \mathcal{C}(S)$ of $\mathcal{C}(S)$ and for any $m$, the subpath $[u_{-m}, \cdots, u_0, \cdots, u_m]$ is a geodesic segment connecting $u_{-m}$ and $u_m$. It is quite obvious that a non periodic or a non pseudo-Anosov map does not preserve any bi-infinite geodesic. See Section 2 for more expositions. 

The question arises as to whether there exist some primitive pseudo-Anosov maps that preserve bi-infinite geodesics. 

Let $x$ be a puncture of $S$. Let $\mathscr{F}^*\subset \mbox{Mod}(S)$ be the subgroup consisting of mapping classes projecting to the trivial mapping class on $\tilde{S}=S\cup \{x\}$.  Let $\mathscr{F}\subset \mathscr{F}^*$ be the subset consisting of primitive pseudo-Anosov elements isotopic to the identity on $\tilde{S}$. Then $\mathscr{F}\neq \emptyset$ and contains infinitely many elements (Kra \cite{Kr}). 
More precisely, each primitive and oriented filling closed geodesic $\tilde{c}$ on $\tilde{S}$ (that is, $\tilde{c}$ is not a power of any other closed geodesic and intersects every simple closed geodesic on $\tilde{S}$) is associated with a conjugacy class $H(\tilde{c})$ that consists of mapping classes conjugate in $\mbox{Mod}(S)$ to the point-pushing pseudo-Anosov mapping class along the geodesic $\tilde{c}$, and  $\mathscr{F}$ is partitioned into a disjoint union of conjugacy 
classes $H(\tilde{c})$ for all primitive and oriented filling closed geodesics on $\tilde{S}$. 

Let $\mathscr{S}$ denote the set of primitive, oriented filling closed geodesics on $\tilde{S}$, and let $\mathscr{S}(2)$ be the subset of $\mathscr{S}$ consisting of filling closed geodesics that intersect every simple closed geodesic at least twice. It is easy to see that  both  $\mathscr{S}(2)$ and $\mathscr{S}\backslash \mathscr{S}(2)$ are not empty. For every $\tilde{c}\in \mathscr{S}\backslash \mathscr{S}(2)$, we denote by $\mathscr{S}_{\tilde{c}}$ the (finite) set of simple closed geodesics intersecting $\tilde{c}$ only once.  

Our aim in this paper is to investigate the actions of elements of $\mathscr{F}^*$ on $\mathcal{C}_0(S)$ and to uncover elements in $\mathscr{F}^*$ that preserve some bi-infinite geodesics in $\mathcal{C}(S)$.  In contrast to Theorem 1.3 of \cite{Bow}, we will prove the following result. 

\begin{thm}\label{T1}
Let $S$ be of type $(p,1)$ with $p>1$. We have: 

\noindent $(1)$ Elements of $\mathscr{F}^*\backslash \mathscr{F}$ do not preserve any bi-infinite geodesics in $\mathcal{C}(S)$. 

\noindent  $(2)$ Let $f\in \mathscr{F}$ be such that the corresponding filling geodesic $\tilde{c}\in \mathscr{S}\backslash \mathscr{S}(2)$. Then $f$ preserves at least one bi-infinite geodesic in $\mathcal{C}(S)$. 

\noindent $(3)$ There is a injective map $\mathscr{I}$ of $\mathscr{S}_{\tilde{c}}$  into the set of $f$-invariant bi-infinite geodesics in $\mathcal{C}(S)$ so that $\mathscr{I}(\mathscr{S}_{\tilde{c}})$ consists of disjoint bi-infinite geodesics. 
\end{thm}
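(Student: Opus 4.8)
The plan is to produce, for each $\tilde a\in\mathscr S_{\tilde c}$, an explicit $f$-invariant bi-infinite path in $\mathcal{C}(S)$ and to verify it is a genuine geodesic, where $f=\mathrm{Push}(\tilde c)$. I will freely use the Birman isomorphism $\mathscr F^*\cong\pi_1(\tilde S,x)$ sending $\mathrm{Push}(\gamma)$ to $\gamma$, and Kra's criterion that $\mathrm{Push}(\gamma)$ is pseudo-Anosov exactly when $\gamma$ fills $\tilde S$. For part $(1)$, I would first observe that any mapping class preserving a bi-infinite geodesic of $\mathcal{C}(S)$ must fix its two endpoints in $\partial\mathcal{C}(S)$ and hence act with positive translation length; by the Masur--Minsky classification such an element is loxodromic, i.e. pseudo-Anosov, whereas every other element has bounded orbits. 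An element of $\mathscr F^*\setminus\mathscr F$ that is not pseudo-Anosov is, by Kra's criterion, a push along a non-filling loop and is therefore reducible: it fixes an essential multicurve, acts elliptically on $\mathcal{C}(S)$, and so preserves no bi-infinite geodesic, which is the content of $(1)$.

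For part $(2)$, fix $\tilde a\in\mathscr S_{\tilde c}$, so that $i_{\tilde S}(\tilde a,\tilde c)=1$, and let $a$ be $\tilde a$ regarded as a vertex of $\mathcal{C}(S)$ after isotoping the puncture $x$ off $\tilde a$. I take as candidate the orbit path $L_{\tilde a}=[\dots,f^{-1}(a),a,f(a),\dots]$. The first step is to show that consecutive vertices are disjoint, i.e. $i_S(a,f(a))=0$: because $\tilde c$ meets $\tilde a$ exactly once, the point-push drags $\tilde a$ a single time across itself and the result can be isotoped off $\tilde a$ on $S$. This single-crossing computation is precisely where the hypothesis $\tilde c\in\mathscr S\setminus\mathscr S(2)$ enters, and it breaks down when every curve meets $\tilde c$ at least twice. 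Since $f$ is pseudo-Anosov it fixes no vertex, so $a\neq f(a)$ and $d_{\mathcal{C}}(a,f(a))=1$; by $f$-equivariance the whole path has unit edges, giving $d_{\mathcal{C}}(a,f^n(a))\le n$. As $f^n(a)$ converges to the attracting (resp. repelling) fixed point of $f$ on $\partial\mathcal{C}(S)$ as $n\to\pm\infty$, the path is bi-infinite with endpoints at infinity.

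It remains to prove the matching lower bound $d_{\mathcal{C}}(a,f^n(a))\ge n$, which is the crux and the step I expect to be the main obstacle. I would introduce an $f$-equivariant ``relative winding'' monovariant $\nu\colon\mathcal{C}_0(S)\to\mathbb{Z}$, measuring how a curve wraps about the puncture $x$ along $\tilde c$, normalized so that $\nu(f(w))=\nu(w)+1$ for every vertex $w$, and prove the one-Lipschitz estimate $|\nu(w)-\nu(w')|\le 1$ whenever $i_S(w,w')=0$. Granting these, any path from $a$ to $f^n(a)$ has length at least $\nu(f^n(a))-\nu(a)=n$, forcing $L_{\tilde a}$ to be geodesic. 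The hard part is exactly the construction of $\nu$ on all of $\mathcal{C}_0(S)$ together with the unit Lipschitz bound for disjoint curves; the single-intersection hypothesis $i_{\tilde S}(\tilde a,\tilde c)=1$ is indispensable here, since coarse intersection-growth bounds only yield $d_{\mathcal{C}}(a,f^n(a))\ge(\log\lambda)\,n$ for the dilatation $\lambda$ of $f$, which proves quasigeodesicity but not the sharp constant needed for a true geodesic. Alternatively one could seek a subsurface $W$ with $d_W(a,f^n(a))\to\infty$ and invoke the bounded geodesic image theorem, but choosing such a $W$ for a point-push is itself delicate, so I expect the winding monovariant to be the cleaner route.

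Finally, part $(3)$ falls out of the forgetful map. Let $\Pi\colon\mathcal{C}_0(S)\to\mathcal{C}_0(\tilde S)$ be the operation of filling in the puncture $x$. Since $f$ is isotopic to the identity on $\tilde S$, one has $\Pi(f^n(a))=\tilde a$ for every $n$, so every vertex of $L_{\tilde a}$ lies in the single fiber $\Pi^{-1}(\tilde a)$. Defining $\mathscr I(\tilde a)=L_{\tilde a}$, distinct curves $\tilde a\neq\tilde a'$ in $\mathscr S_{\tilde c}$ have disjoint fibers, $\Pi^{-1}(\tilde a)\cap\Pi^{-1}(\tilde a')=\emptyset$, whence $L_{\tilde a}$ and $L_{\tilde a'}$ share no vertex. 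This yields both the injectivity of $\mathscr I$ and the disjointness of its image simultaneously. I note that all the geodesics in $\mathscr I(\mathscr S_{\tilde c})$ run between the same pair of boundary points of $\mathcal{C}(S)$, namely the stable and unstable laminations of $f$, which is consistent with $\mathcal{C}(S)$ being far from uniquely geodesic.
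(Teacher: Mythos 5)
Your skeleton for parts (2) and (3) is the same as the paper's: build the orbit path $[\dots,f^{-1}(a),a,f(a),\dots]$ of a suitably chosen lift $a$ of a curve $\tilde a$ with $i(\tilde a,\tilde c)=1$, get the upper bound $d_{\mathcal{C}}(a,f^n(a))\le n$ from unit edges, match it with the sharp lower bound $d_{\mathcal{C}}(a,f^n(a))\ge n$, and then deduce injectivity and disjointness of $\mathscr{I}$ from the fact that every vertex of the orbit geodesic stays in the fiber of the forgetful map over $\tilde a$ (your part (3) is essentially identical to the paper's). The genuine gap is that the crux --- the lower bound --- is only named, not proved: your ``relative winding'' monovariant $\nu$ with $\nu(f(w))=\nu(w)+1$ and $|\nu(w)-\nu(w')|\le 1$ for disjoint $w,w'$ is postulated, and you yourself flag its construction as the main obstacle. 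That construction is exactly the technical content of the paper's Sections 3--4: via the Bers--Birman identification of $f$ with an essential hyperbolic element $g\in G$, each vertex gets a configuration $(\tau_u,\Omega_u,\mathscr{U}_u)$ in $\mathbf{H}$, the plane is partitioned by the level geodesics $g^j(\partial\Delta_0^*)$, and Lemma 3.1 (if $i(\tilde c,\tilde u_0)\ge 2$ then $d_{\mathcal{C}}(u_0,f^m(u_0))\ge m+1$) together with Lemma 4.1 (a path whose region ever climbs above its level has length $\ge m$) is precisely the ``at most one level per step'' Lipschitz property you need; these in turn rest on nontrivial facts from the author's earlier papers about half-planes and intersection criteria. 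Without that machinery your proof of (2) does not close. A second, related omission: $i_S(a,f(a))=0$ holds only for lifts aligned with the push --- the paper's hypothesis $\Omega_u\cap\mathrm{axis}(g)\neq\emptyset$ in Theorem 1.2; ``isotoping the puncture off $\tilde a$'' does not pin this down, and for a badly positioned lift of $\tilde a$ one in fact gets $d_{\mathcal{C}}(a,f(a))\ge 2$, so the choice of $a$ must be made (and its independence addressed, as the paper does when showing $\mathscr{I}$ is well defined).

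Part (1) also has a real hole. Preserving a bi-infinite geodesic does not force fixing its two ideal endpoints --- the map could swap them and act as a reflection --- so ``invariant geodesic $\Rightarrow$ positive translation length'' is unjustified; and even granting bounded orbits for a push along a non-filling loop, the inference ``elliptic, hence preserves no bi-infinite geodesic'' is exactly the statement requiring proof, since an elliptic isometry of a hyperbolic complex can a priori preserve a line setwise. The paper devotes Lemma 2.2 to this point, ruling it out by an intersection-number argument: a curve $u$ fixed by a power of $f$ has bounded geometric intersection with $f^{\pm j}(u_{\pm m})$, hence bounded $d_{\mathcal{C}}$-distance to both rays, contradicting that the two rays diverge. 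Finally, note that your case analysis covers only the non-pseudo-Anosov elements of $\mathscr{F}^*\backslash\mathscr{F}$, whereas that set also contains proper powers of elements of $\mathscr{F}$, which are pseudo-Anosov; the paper's one-line proof of (1) glosses over these as well (and indeed, for $\tilde c\in\mathscr{S}\backslash\mathscr{S}(2)$ any geodesic furnished by part (2) is preserved by every power $f^k$, so this is arguably a defect of the statement itself), but your write-up inherits the same unaddressed case.
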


\noindent {\em Remark. } It is not known whether $\mathscr{I}$ is a bijection; and whether $f\in \mathscr{F}$ preserves a bi-infinite geodesic when the corresponding filling geodesic $\tilde{c}$ is in $\mathscr{S}(2)$. 

\smallskip

The curve complex $\mathcal{C}(\tilde{S})$ along with the vertex set $\mathcal{C}_0(\tilde{S})$ and the path metric $d_{\mathcal{C}}$ on $\mathcal{C}(\tilde{S})$
can similarly be defined.  For each $\tilde{u}\in \mathcal{C}_0(\tilde{S})$, let $F_{\tilde{u}}$ denote the set of vertices $u$ in $\mathcal{C}_0(S)$ such that $u$ is freely homotopic to $\tilde{u}$ as the puncture $x$ is filled in. Let $\mathbf{H}$ be a hyperbolic plane and let $\varrho:\mathbf{H}\rightarrow \tilde{S}$ be the universal covering map with covering group $G$. Then with the help of the covering map $\varrho$, every $u\in F_{\tilde{u}}$ is associated with a configuration $(\tau_u,\Omega_u, \mathscr{U}_u)$, and every element $f\in H(\tilde{c})$ corresponds to an essential hyperbolic element $g$ of $G$. Let $\mbox{axis}(g)$ be the axis of $g$ that is the unique $g$-invariant geodesic in $\mathbf{H}$. See Section 2 for more details. 

Let $f^m(u)$ denote the geodesic freely homotopic to the image curve of $u$ under $f^m$. Theorem \ref{T1} follows from the following result. 

\begin{thm}\label{T2}
Let $\tilde{u}\in \mathcal{C}_0(\tilde{S})$ and $\tilde{c}\in \mathscr{S}$. Let $u\in F_{\tilde{u}}$ and $f\in H(\tilde{c})$ be such that 
$\Omega_u\cap \rm{axis}(g)\neq \emptyset$. Then $\tilde{u}$ intersects $\tilde{c}$ only once if and only if $d_{\mathcal{C}}(u,f^m(u))=m$ for all $m$, in which case, there is a unique geodesic segment connecting $u$ and $f^m(u)$. 
\end{thm}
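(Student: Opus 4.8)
The plan is to transport the whole question to the universal cover $\mathbf{H}$ and to read off the curve-complex distances from the way the lifts of the various curves separate the preimages of the puncture $x$ that lie along $\mathrm{axis}(g)$. The first observation I would record is that $f$, being an orientation-preserving homeomorphism of $S$, acts on $\mathcal{C}(S)$ by isometries, and that every iterate has the same image downstairs: since $f\in\mathscr{F}$ is isotopic to the identity on $\tilde{S}$, the curve $f^m(u)$ fills in to $\tilde{u}$ for every $m$, so $f^m(u)\in F_{\tilde{u}}$ and $d_{\mathcal{C}}(u,f^m(u))$ measures nothing but the winding of $f^m(u)$ around $x$ relative to $u$. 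In the cover this winding is exactly the transport by $g$: the configuration of $f^m(u)$ is the configuration of $u$ carried forward by $g^m$, with the distinguished preimage of $x$ moving from $\tilde{x}$ to $g^m\tilde{x}$ along $\gamma:=\mathrm{axis}(g)$, the hypothesis $\Omega_u\cap\mathrm{axis}(g)\neq\emptyset$ being precisely what selects the lift for which this transport runs along the axis rather than off to the side.

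Next I would settle the upper bound and the ``only if'' direction together, by a disjointness computation in $\mathbf{H}$. Comparing the lift family of $u$ with that of $f(u)$ relative to $\gamma$, one checks that the translates of $\mathrm{axis}(\tau_u)$ and of $\mathrm{axis}(\tau_{f(u)})$ are unlinked exactly when $\mathrm{axis}(\tau_u)$ crosses $\gamma$ once per period, i.e.\ when $i(\tilde{u},\tilde{c})=1$. Thus in the once-case $u$ and $f(u)$ are disjoint, $d_{\mathcal{C}}(u,f(u))=1$, and applying the isometry $f^j$ gives a chain $[u,f(u),\dots,f^m(u)]$ of pairwise disjoint curves, so $d_{\mathcal{C}}(u,f^m(u))\le m$. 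Conversely, if $\tilde{u}$ meets $\tilde{c}$ at least twice then $u$ and $f(u)$ cross, so $d_{\mathcal{C}}(u,f(u))\ge 2$ and the equality ``$d_{\mathcal{C}}(u,f^m(u))=m$ for all $m$'' already fails at $m=1$. This leaves the lower bound $d_{\mathcal{C}}(u,f^m(u))\ge m$ in the once-case, together with uniqueness, as the real content.

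For the lower bound I would introduce a winding counter attached to the pair of preimages $\tilde{x}$ and $g^m\tilde{x}$. The central lemma, whose proof is where the configurations $(\tau_w,\Omega_w,\mathscr{U}_w)$ of Section~2 must be used in full, is that along any path $u=w_0,w_1,\dots,w_k=f^m(u)$ of pairwise disjoint simple closed curves the role of ``separating $\tilde{x}$ from $g^m\tilde{x}$'' can be handed off across at most one lift at each disjointness step, so that the designated $x$-preimage advances by at most one $g$-translate per step. Here the once-crossing hypothesis is exactly what makes the hand-off rigid: because $\mathrm{axis}(\tau_u)$ meets $\gamma$ only once per period, the nested configurations realizing the $m$ windings from $\tilde{x}$ to $g^m\tilde{x}$ are ``thin'' and cannot be skipped, so no single disjoint move advances the winding by two or more. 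Since $u$ and $f^m(u)$ differ by the full $m$ windings, the lemma forces $k\ge m$, giving $d_{\mathcal{C}}(u,f^m(u))\ge m$ and hence, with the upper bound, equality.

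Finally, uniqueness follows from the rigidity built into the same estimate. Along a geodesic the per-step bound must be attained with no slack, so the winding advances by exactly one at each stage; I would then argue that in the once-case there is, at each stage, a unique simple closed curve whose lift family separates $\tilde{x}$ from $g^m\tilde{x}$ in that many places while staying disjoint from its neighbours, and that this curve is forced to be $f^{j}(u)$. Hence $[u,f(u),\dots,f^m(u)]$ is the only geodesic segment. I expect the main obstacle to be precisely the central lemma of the third paragraph: establishing the \emph{unit}-step bound (not merely a bounded one) under disjointness, and verifying that the once-crossing geometry of $\mathrm{axis}(\tau_u)$ against $\mathrm{axis}(g)$ both forbids shortcuts and pins down each intermediate curve, is the technical heart on which both the lower bound and the uniqueness statement rest.
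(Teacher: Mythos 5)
Your outline reproduces the architecture of the paper's own proof: transport to $\mathbf{H}$, a level structure along $\mathrm{axis}(g)$ given by $g$-translates of a distinguished lift of $\tilde{u}$, an ``advance by at most one level per disjointness step'' estimate for the lower bound, and rigidity of that estimate for uniqueness. But there is a genuine gap, and you have located it yourself: the central lemma of your third paragraph, and the pinning-down claim of your fourth, are asserted rather than proven, and they are not formalities --- they constitute essentially all of the technical content of the paper (Lemma \ref{D}, Lemma \ref{L3.1} and Lemma \ref{L3.3}). A priori nothing prevents a curve $w_{j+1}$ disjoint from $w_j$ from winding arbitrarily far ahead along the axis; excluding exactly this is the point. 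The paper does it with the configuration machinery: if $\Omega_j$ lies above level $j$, then (Lemma 3.1 of \cite{CZ10}) there is a maximal half-plane $\Delta_j\in\mathscr{U}_j$ covering the attracting fixed point $A$ of $g$ whose boundary lies above or crosses the level-$(j+1)$ geodesic; then for any candidate next curve a two-case dichotomy applies --- either the relevant boundaries cross, forcing $\tilde{u}_j$ to intersect $\tilde{u}_{j+1}$, or the two half-planes have disjoint boundaries with union all of $\mathbf{H}$, in which case Lemma 4 of \cite{CZ1} forces $u_j$ to intersect $u_{j+1}$. Either way $d_{\mathcal{C}}(u_j,u_{j+1})\geq 2$, contradicting adjacency. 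Your appeal to the configurations being ``thin'' in the once-crossing case does not substitute for this dichotomy; it is the half-plane covering $A$, not the thinness of the strip between consecutive levels, that makes the hand-off argument work.

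There is also a flaw in your second paragraph. The quantity $\mathrm{axis}(\tau_u)$ is not defined: $\tau_u$ is a lift of a Dehn twist, not a hyperbolic isometry. More seriously, the proposed unlinking test cannot work even after repair, because $u$ and $f(u)$ project to the \emph{same} curve $\tilde{u}$ on $\tilde{S}$, so their lift families $\varrho^{-1}(\tilde{u})$ in $\mathbf{H}$ are literally identical; no crossing or linking pattern between the two families can distinguish them. What separates $u$ from $f(u)$ is which complementary region carries the distinguished data, and the correct criterion is the adjacency one: $d_{\mathcal{C}}(u,f(u))=1$ if and only if $\Omega_u$ and $g(\Omega_u)$ share a boundary geodesic (Lemma 2.1 of \cite{CZ11}). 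With that criterion, your claim that $i(\tilde{u},\tilde{c})\geq 2$ forces $u$ and $f(u)$ to cross is correct --- it is the case $m=1$ of Lemma \ref{D}, since the lifts of $\tilde{u}$ crossed by the axis strictly between $\partial\Delta_0^*$ and $g(\partial\Delta_0^*)$ separate $\Omega_u$ from $g(\Omega_u)$ --- but proving it requires the same region machinery. Filling in these items would essentially reconstruct Sections 3 and 4 of the paper.
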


This paper is organized as follows. In Section 2, we collect some basic facts about mapping class groups acting on the curve complex, as well as some background information on Bers isomorphisms. In Section 3, we refine the argument in \cite{CZ11} to estimate the lower bound for the distance $d_{\mathcal{C}}(u,f^m(u))$ in terms of the intersection number between the corresponding geodesics.  In Section 4, we relate a geodesic segment joining $u$ and $f^m(u)$ to a sequence of adjacent convex regions in $\mathbf{H}$. In Section 5, we prove the main results.

\section{Mapping class group acting on the curve complex}
\setcounter{equation}{0}

\noindent {\bf \S 2.1. } In \cite{M-M}, Masur--Minsky proved that there is a constant $\epsilon$, depending only on the type $(p,n)$ (with $3p+n-4>0$) of the surface $S$, such that for any pseudo-Anosov map $f$, any vertex $u\in \mathcal{C}_0(S)$ and any integer $m>0$, $d_{\mathcal{C}}(u,f^m(u))\geq \epsilon |m|$. From this fact together with the Nielsen--Thurston classification for mapping classes \cite{Th}, the following result is easily deduced:

\begin{lem} \cite{M-M} \label{L2.1}
Let $S$ be as above, and let $f\in \rm{Mod}(S)$. Then either $f^q$ for some $q$ has fixed points in $\mathcal{C}_0(S)$, or $f$ acts on $\mathcal{C}(S)$ as a hyperbolic translation which has two fixed points on $\partial \mathcal{C}(S)$.
\end{lem}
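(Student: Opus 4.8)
The plan is to invoke the Nielsen--Thurston trichotomy recalled in the introduction and treat the three resulting cases separately, matching the periodic and reducible cases to the first alternative of the statement and the pseudo-Anosov case to the second. If $f$ is periodic, then $f^q=\mathrm{id}$ for some $q$, so every vertex of $\mathcal{C}_0(S)$ is fixed by $f^q$. If $f$ is reducible, then $f$ permutes the components of its canonical reduction system, which is a nonempty collection of disjoint essential simple closed curves; taking $q$ to be a common multiple of the lengths of the permutation cycles, $f^q$ fixes the isotopy class of each reducing curve, and hence $f^q$ fixes a vertex of $\mathcal{C}_0(S)$. In both cases the first alternative holds.

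It then remains to treat the pseudo-Anosov case and show that the second alternative holds. Fixing a base vertex $u\in\mathcal{C}_0(S)$, the Masur--Minsky bound $d_{\mathcal{C}}(u,f^m(u))\ge\epsilon|m|$ recalled above does two things at once. First, it shows that no power $f^q$ can fix $u$, nor any other vertex (apply the same bound at that vertex), since otherwise $d_{\mathcal{C}}(v,f^{qm}(v))=0$ would contradict the linear lower bound as $m\to\infty$; this separates the pseudo-Anosov case cleanly from the first alternative. Second, it shows that the stable translation length $\tau(f)=\lim_{m\to\infty}d_{\mathcal{C}}(u,f^m(u))/m$ is at least $\epsilon>0$. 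Because $\mathcal{C}(S)$ is $\delta$-hyperbolic, I would then appeal to the general classification of isometries of a Gromov hyperbolic space: an isometry with positive stable translation length is loxodromic (hyperbolic), and a loxodromic isometry fixes exactly two points of the Gromov boundary, namely the limits $\xi_\pm=\lim_{m\to\pm\infty}f^m(u)$, which are well defined and distinct precisely because the orbit escapes to infinity at a definite linear rate. This yields the required hyperbolic translation with two fixed points on $\partial\mathcal{C}(S)$.

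The main obstacle is that $\mathcal{C}(S)$ is not locally finite and not proper, so $\partial\mathcal{C}(S)$ must be built from equivalence classes of Gromov sequences rather than from geodesic rays in a compact space, and the usual compactness arguments are unavailable. The real work is therefore to verify that $(f^m(u))_m$ and $(f^{-m}(u))_m$ are convergent Gromov sequences, that their limits are genuinely distinct and $f$-fixed, and that there are no other fixed points on the boundary. Here the linear lower bound is exactly what is needed: it bounds the Gromov products $\langle f^m(u),f^{m'}(u)\rangle_u$ from below, forcing convergence and separating $\xi_+$ from $\xi_-$, so that $f$ acts with north--south dynamics on $\partial\mathcal{C}(S)$. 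Since all of these facts are established in \cite{M-M} and rest on the standard $\delta$-hyperbolic geometry of \cite{Gro}, I would cite them rather than reprove them, the essential input beyond the abstract theory being precisely the Masur--Minsky linear growth estimate.
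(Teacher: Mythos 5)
Your proposal is correct and follows essentially the same route as the paper, which deduces the lemma exactly as you do: the Nielsen--Thurston trichotomy handles the periodic and reducible cases (giving fixed vertices for a power of $f$), while the Masur--Minsky linear lower bound $d_{\mathcal{C}}(u,f^m(u))\geq \epsilon|m|$ supplies positive stable translation length in the pseudo-Anosov case, whence $f$ is a hyperbolic isometry of the $\delta$-hyperbolic complex $\mathcal{C}(S)$ with two fixed points on $\partial\mathcal{C}(S)$, citing \cite{M-M} and \cite{Gro} for the boundary theory in the non-proper setting. Your filling-in of the Gromov-sequence subtleties is a faithful expansion of what the paper leaves as ``easily deduced.''
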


For the notion of hyperbolic translations, we refer to Gromov \cite{Gro}. Note that the two classes in Lemma \ref{L2.1} are exclusive. As an easy  corollary of Lemma \ref{L2.1}, we obtain

\begin{lem}\label{L2.2}
If $f\in \rm{Mod}(S)$ is reducible, then $f$ does not preserve any bi-infinite geodesic in $\mathcal{C}(S)$.
\end{lem}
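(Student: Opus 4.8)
The plan is to exploit Lemma \ref{L2.1} together with the Nielsen--Thurston trichotomy, reducing everything to how an isometry can act on a bi-infinite geodesic viewed as an isometric copy of $\Z$. Suppose, for contradiction, that the reducible map $f$ preserves a bi-infinite geodesic $L=[\ldots,u_{-1},u_0,u_1,\ldots]$ with distinct endpoints $\xi_-,\xi_+\in\partial\mathcal{C}(S)$. Since $L$ is a geodesic, $d_{\mathcal{C}}(u_i,u_j)=|i-j|$, so the restriction $f|_L$ is an isometry of $(\Z,|\cdot|)$, hence either a nontrivial translation $u_n\mapsto u_{n+k}$ with $k\neq 0$, or a map with $f^2|_L=\mathrm{id}_L$ (the trivial translation together with the reflections $u_n\mapsto u_{a-n}$). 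These two possibilities will be ruled out separately.

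First I would dispose of the translation case. If $f|_L$ is translation by $k\neq 0$, then $d_{\mathcal{C}}(u_0,f^m(u_0))=|mk|\to\infty$, so the orbit of $u_0$ is unbounded; by Lemma \ref{L2.1} this forces $f$ to act as a hyperbolic translation, and hence, as recalled in the introduction, $f$ is pseudo-Anosov. This contradicts the reducibility of $f$, since the Nielsen--Thurston classes are mutually exclusive.

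It remains to treat the case $f^2|_L=\mathrm{id}_L$, which I expect to be the main obstacle: here $f$ has bounded orbits, so the coarse classification of Lemma \ref{L2.1} alone raises no objection, and one must argue topologically. Because $L$ is a geodesic we have $d_{\mathcal{C}}(u_0,u_3)=3$, and by the standard characterization of distance in the curve complex, $d_{\mathcal{C}}(u_0,u_3)\geq 3$ implies that $u_0\cup u_3$ fills $S$. Now $f^2$ fixes the vertices $u_0$ and $u_3$, that is, it fixes the isotopy classes of a filling pair of curves; by the classical fact that the stabilizer in $\mathrm{Mod}(S)$ of a filling system of curves is finite, $f^2$ has finite order. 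Therefore $f$ is periodic, again contradicting the reducibility of $f$. Combining the two cases yields the Lemma.

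I would finally remark that the hypothesis of reducibility enters only through the exclusivity of the Nielsen--Thurston classes, so the very same argument shows that no periodic map preserves a bi-infinite geodesic either; the genuinely new input beyond Lemma \ref{L2.1} is the filling-pair rigidity, which is precisely what rules out the reflection and pointwise-fixing behaviour that the Gromov classification cannot detect.
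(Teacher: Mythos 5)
Your proof is correct, but it takes a genuinely different route from the paper's. The paper never analyzes the combinatorial structure of $f|_L$. Instead it uses reducibility to pass to a power of $f$ fixing some vertex $u\in\mathcal{C}_0(S)$, observes that $i(u,f^j(u_{\pm m}))=i(u,u_{\pm m})$ is bounded because $f^j$ is a homeomorphism, and then applies the intersection-number bound on distance (Lemma 2.1 of \cite{M-M}) together with the triangle inequality to trap the $f$-orbit of the far-out vertices of $L$ in a bounded set; this contradicts the fact that invariance of $L$, plus the absence of fixed vertices far out on $L$, forces $d_{\mathcal{C}}(f^j(u_m),f^{-j}(u_{-m}))\rightarrow\infty$. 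Your decomposition into translation versus reflection on $L\cong\mathbb{Z}$ replaces these inputs by the dichotomy of Lemma \ref{L2.1} (translation case) and by the finiteness of the stabilizer of a filling pair via the Alexander method (reflection case). Your route buys something real: it never needs the paper's opening assertion that every power of a reducible map fixes only finitely many vertices of $\mathcal{C}_0(S)$, an assertion which is in fact false as stated (a Dehn twist fixes every curve disjoint from its support), and it handles a possible orientation-reversal of $L$ transparently, where the paper has to pass to subsequences. The paper's route, in exchange, needs only the intersection--distance bound and no filling-pair rigidity.

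Two caveats, neither fatal to the lemma. First, in the translation case, passing from ``unbounded orbit'' to ``hyperbolic'' via Lemma \ref{L2.1} requires excluding the other horn of the dichotomy: if $f^q(v)=v$ for some vertex $v$ and some $q$, then $d_{\mathcal{C}}(u_0,f^{qm}(u_0))\leq 2\,d_{\mathcal{C}}(u_0,v)$ for all $m$, contradicting $d_{\mathcal{C}}(u_0,f^{qm}(u_0))=|qmk|\rightarrow\infty$. This is one line, but it is the step that does the work, and you left it implicit. Second, your closing remark is wrong, and it misreads the scope of your own argument. In the reflection case your conclusion is precisely that $f$ is periodic; this contradicts the hypothesis only because, under the paper's Bers-style conventions, ``reducible'' means parabolic or pseudo-hyperbolic, hence non-periodic. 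If $f$ were assumed periodic from the start, the reflection case would produce no contradiction whatsoever, so the argument proves nothing about periodic maps --- and it cannot, since periodic maps can preserve bi-infinite geodesics (on a closed genus-two surface the hyperelliptic involution acts trivially on the entire curve complex). This is consistent with the paper, which claims non-preservation only for classes that are neither periodic nor pseudo-Anosov.
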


\begin{proof}

Suppose that a reducible mapping class $f$ in $\mbox{Mod}(S)$ keeps an infinite geodesic $L=[\cdots, u_{-m}, \cdots, u_0, \cdots, u_m, \cdots]$ invariant, i.e., $f(L)=L$. Since $f^q$ (for all $q$) keeps only finitely many vertices in $\mathcal{C}_0(S)$, there is an integer $m>0$ such that $f^q$ for any $q$ has no fixed points on the union of the rays $[u_{m}, \cdots]\cup [\cdots, u_{-m}]$. 

By selecting a subsequence if needed, we may assume without loss of generality that (i) $f^j([u_m,\cdots])\subset [u_m,\cdots]$, (ii) $f^{-j}([\cdots,u_{-m}])\subset [\cdots,u_{-m}]$, and
(iii) as $j\rightarrow +\infty$, both $d_{\mathcal{C}}(u_{m}, f^j(u_{m}))$ and
$d_{\mathcal{C}}(u_{-m}, f^{-j}(u_{-m}))$ tend to infinity.  Note that $f^{j}(u_{m})$ and $f^{-j}(u_{-m})$ belong to $L$ and that $d_{\mathcal{C}}(u_{-m}, u_{m})$ is finite. It follows that  
$$
d_{\mathcal{C}}(f^{j}(u_{m}), f^{-j}(u_{-m}))\rightarrow +\infty
$$ 
as $m\rightarrow +\infty$.  

By taking a suitable power if necessary, we may also assume that $f(u)=u$ for some $u\in \mathcal{C}_0(S)$. Denote by $K=\mbox{max} \{ i(u,u_{m}), i(u,u_{-m})\}$. Then since $f^j$ is a homeomorphism, $K\geq i(u,u_m)=i(f^j(u),f^j(u_{m}))=i(u,f^j(u_{m}))$. Similarly, we have $K\geq i(u,f^{-j}(u_{-m}))$. From Lemma 2.1 of \cite{M-M} (or \cite{Bow1}), we conclude that $d_{\mathcal{C}}(u,f^j(u_{m}))\leq K+1$ and $d_{\mathcal{C}}(u,f^{-j}(u_{-m}))\leq K+1$. It follows from the triangle inequality that 
$d_{\mathcal{C}}(f^{-j}(u_{m}),f^j(u_{m}))< +\infty$ for all $m$. This contradicts that $d_{\mathcal{C}}(f^{j}(u_{m}), f^{-j}(u_{-m}))\rightarrow +\infty$. 
\end{proof}

\noindent {\bf \S 2.2. } Let $Q(G)$ denote the group of quasiconformal automorphisms $w$ on the hyperbolic plane $\mathbf{H}$ that satisfy $wGw^{-1}=G$. Following Bers \cite{Bers1}, two such maps $w,w'\in Q(G)$ are said equivalent if $w|_{\mathbf{S}^1}=w'|_{\mathbf{S}^1}$, where $\mathbf{S}^1$ denotes the unit circle which can be identified with the boundary of $\mathbf{H}$. Denote by $[w]$ the equivalence class of $w\in Q(G)$ and by $Q(G)/\!\sim$ the quotient group of $Q(G)$ by the above equivalence relation. The Bers isomorphism theorem (Theorem 9 of \cite{Bers1}) asserts that there is an isomorphism $\varphi^*$ of $Q(G)/\!\sim$ onto 
Mod$(S)$. For simplicity, let $[w]^*$ denote the mapping class $\varphi^*([w])$. 

It is clear that $G$ can be regarded as a normal subgroup of $Q(G)/\!\sim$.  Every hyperbolic element $h\in G$ keeps a unique geodesic in $\mathbf{H}$ invariant. This geodesic is called the axis of $h$ and is denoted by axis$(h)$. A hyperbolic element $g\in G$ is called essential if $\varrho(\mbox{axis}(g))$ is a filling closed geodesic. Let $G'\subset G$ be the collection of all primitive essential hyperbolic elements. Then $\varphi^*(G')=\mathscr{F}$ and $\varphi^*(G)=\mathscr{F}^*$. For an element $h\in G$, we denote by $h^*$ the mapping class $\varphi^*(h)$. 

Let $\pi_1(\tilde{S},x)$ denote the fundamental group of $\tilde{S}$. Let 
$\mu:G\rightarrow \pi_1(\tilde{S},x)$ denote an isomorphism (which depends only on the choice of a point $\hat{x}\in \mathbf{H}$ with $\varrho(\hat{x})=x$). By virtue of Theorem 4.1 and Theorem 4.2 in Birman \cite{Bir}, there is an exact sequence 
\begin{equation}\label{BIR}
0 \longrightarrow \pi_1(\tilde{S},x)\cong G\longrightarrow \mbox{Mod}(S)\longrightarrow \mbox{Mod}(\tilde{S})\longrightarrow 0,
\end{equation}
where $\mbox{Mod}(S)\rightarrow \mbox{Mod}(\tilde{S})$ is the natural puncture-forgetting projection. In (\ref{BIR}) an element $g\in G$ is identified with the pure mapping class in $\mbox{Mod}(S)$ that corresponds to the loop representing $\mu(g)$ in $\pi_1(\tilde{S},x)$. 

Let $u\in \mathcal{C}_0(S)$ be a non preperipheral vertex; that is, $u$ is homotopic to a non-trivial geodesic on $\tilde{S}$ if $u$ is also viewed as a curve on $\tilde{S}$. Let $\tilde{u}\in \mathcal{C}_0(\tilde{S})$ be the corresponding vertex. Denote by $\mathscr{R}_{\tilde{u}}$ the collection of all components of $\mathbf{H}\backslash \{\varrho^{-1}(\tilde{u})\}$, where 
$$\{\varrho^{-1}(\tilde{u})\}=\{\mbox{all geodesics $\hat{u}$ in }\mathbf{H}\ \mbox{such that }\varrho(\hat{u})=\tilde{u}\}.
$$
Two components $\Omega_1,\Omega_2\in \mathscr{R}_{\tilde{u}}$ are said adjacent if $\Omega_1$ and $\Omega_2$ share a common geodesic boundary $a$, that is,  $\overline{\Omega}_1\cap \overline{\Omega}_2=a$. Note that $a\in \{\varrho^{-1}(\tilde{u})\}$. It was shown (Lemma 2.1 of \cite{CZ11}) that there is a bijection 
$\chi$ between  $\mathscr{R}_{\tilde{u}}$ and $F_{\tilde{u}}$, and two regions  $\Omega_1$ and $\Omega_2\in \mathscr{R}_{\tilde{u}}$  are adjacent if and only if $d_{\mathcal{C}}(\chi(\Omega_1),\chi(\Omega_2))=1$, in which case, $\{\chi(\Omega_1), \chi(\Omega_2)\}$ forms the boundary of an $x$-punctured cylinder on $S$. That is to say, $\chi(\Omega_1)$ and $\chi(\Omega_2)$ are disjoint and homotopic to each other on $\tilde{S}$ when $\chi(\Omega_1)$ and $\chi(\Omega_2)$ are viewed as curves on $\tilde{S}$. It was shown in \cite{CZ11} that any fiber $F_{\tilde{u}}$, $\tilde{u}\in \mathcal{C}_0(\tilde{S})$, is path connected in $F_{\tilde{u}}$ (The fact that $F_{\tilde{u}}$ is connected for closed surface $\tilde{S}$ was proved in \cite{Sch}). 

Now each $u\in F_{\tilde{u}}$ is non preperipheral, which allows us to define a {\it configuration} $(\tau_u,\Omega_u, \mathscr{U}_u)$ corresponding to $u$, where $\Omega_u=\chi^{-1}(u)$, $\tau_u$ is the lift of the Dehn twist $t_{\tilde{u}}$ so that $\tau_u|_{\Omega_u}=\mbox{id}$ and $[\tau_u]^*=t_u$, and $\mathscr{U}_u$ is a partially ordered set which is the collection of all half-planes in $\mathbf{H}$ defined by $\tau_u$. Maximal elements of $\mathscr{U}_u$ are mutually disjoint, and their union is the complement of $\Omega_u$ in $\mathbf{H}$. From the construction, we also know that $\tau_u$ keeps each maximal element of $\mathscr{U}_u$ invariant. See \cite{CZ2} for more details.

\section{Distances and intersection numbers between vertices}
\setcounter{equation}{0}

Throughout the rest of the article we assume that $S$ is of type $(p,1)$ with $p>1$. This assumption guarantees that each vertex in $\mathcal{C}_0(S)$ is non preperipheral. Fix $\tilde{u}_0\in \mathcal{C}_0(\tilde{S})$ and $\tilde{c}\in \mathscr{S}$. For simplicity, we also use the symbol $i(\tilde{c}, \tilde{u}_0)$ to denote the geometric intersection number between   $\tilde{u}_0$ and $\tilde{c}$. We may assume that $\tilde{u}_0$ intersects $\tilde{c}$ at non self-intersection points of $\tilde{c}$ by performing a small perturbation if needed. Let $u_0\in \mathcal{C}_0(S)$ be obtained from $\tilde{u}_0$ by removing the point $x$. Let $(\tau_0,\Omega_0,\mathscr{U}_0)$ be the configuration that corresponds to $u_0$.
Let $g\in G$ be essential hyperbolic such that $\varrho(\mbox{axis}(g))=\tilde{c}$ and $\mbox{axis}(g)\cap \Omega_{0}\neq \emptyset$. Write $f=g^*$. Then $f\in \mathscr{F}$ is an element of $H(\tilde{c})$. 
 
By abuse of language, in what follows, for each $u\in \mathcal{C}_0(S)$,  we let $\tilde{u}$ be the corresponding vertex in $\mathcal{C}_0(\tilde{S})$ under the natural projection from $\mathcal{C}_0(S)$ onto $\mathcal{C}_0(\tilde{S})$ (which is well defined since $S$ contains only one puncture $x$), which means that $u$ and $\tilde{u}$ are homotopic to each other on $\tilde{S}$ as $x$ is filled in.  
 
The following lemma is a refinement of Theorem 1.2 of \cite{CZ10}.  

\begin{lem}\label{D}
Suppose $i(\tilde{c},\tilde{u}_0)\geq 2$.  Then for any integer $m>0$, we have $d_{\mathcal{C}}(u_0,f^m(u_0))\geq m+1$. 
\end{lem}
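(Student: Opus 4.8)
The plan is to transport the entire question into the hyperbolic plane $\mathbf{H}$ and to read off $d_{\mathcal{C}}$ from the combinatorics of the decomposition $\mathscr{R}_{\tilde{u}_0}$ cut out by $\varrho^{-1}(\tilde{u}_0)$. First I would record the reduction supplied by \S2.2: since $f\in\mathscr{F}^*$ projects to the identity on $\tilde{S}$, the curve $f^m(u_0)$ again lies in the fiber $F_{\tilde{u}_0}$, and naturality of the bijection $\chi\colon\mathscr{R}_{\tilde{u}_0}\to F_{\tilde{u}_0}$ under the deck group $G$ gives $f^m(u_0)=\chi(g^m\Omega_0)$. Because the geodesics $\varrho^{-1}(\tilde{u}_0)$ are pairwise disjoint, their dual region-adjacency graph is a tree, in which the distance between two regions equals the number of separating geodesics, i.e. the number crossed by any geodesic joining them. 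Since $\mathrm{axis}(g)\cap\Omega_0\neq\emptyset$ and one period of $\mathrm{axis}(g)$ projects to $\tilde{c}$, running the axis from $\Omega_0$ to $g^m\Omega_0$ crosses exactly $m\,i(\tilde{c},\tilde{u}_0)$ boundary geodesics; under $i(\tilde{c},\tilde{u}_0)\geq2$ the two regions are thus separated by at least $2m$ lifts of $\tilde{u}_0$, and in particular $\Omega_0$ and $g\Omega_0$ are never adjacent, so $\chi(\Omega_0)$ and $\chi(g\Omega_0)$ are not disjoint.

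This shows the within-fiber distance is already $m\,i(\tilde{c},\tilde{u}_0)$, so the entire content of the lemma is to control geodesics that \emph{leave} $F_{\tilde{u}_0}$. To that end I would introduce a progress (winding) function $\nu$ along a geodesic. Fix a geodesic segment $u_0=w_0,w_1,\dots,w_n=f^m(u_0)$ in $\mathcal{C}(S)$, so consecutive $w_j,w_{j+1}$ are disjoint and hence $\tilde{w}_j,\tilde{w}_{j+1}$ are disjoint (or equal) on $\tilde{S}$. Since $\tilde{c}$ is filling, each $\tilde{w}_j$ meets $\tilde{c}$, so $\varrho^{-1}(\tilde{w}_j)$ crosses $\mathrm{axis}(g)$; choosing these crossing lifts coherently and measuring, relative to the fixed reference $\hat{x}\in\Omega_0$, how many $\langle g\rangle$-periods along $\mathrm{axis}(g)$ a lift has advanced, I would define an integer $\nu(w_j)$ normalized so that $\nu(u_0)=0$ and $\nu(f^m(u_0))=m$. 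The half-plane poset $\mathscr{U}_0$ and the lifted twist $\tau_0$ of the configuration provide the bookkeeping that makes this count well defined even for the intermediate $w_j$, which need not lie in $F_{\tilde{u}_0}$.

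The crux is a coarse-Lipschitz estimate: I would prove that a single edge changes the winding by at most one, $|\nu(w_{j+1})-\nu(w_j)|\leq1$. The input is that disjoint simple geodesics in $\mathbf{H}$ are unlinked, so the axis-crossings of the coherently chosen lifts $\hat{w}_j$ and $\hat{w}_{j+1}$ cannot interleave across a full period of $\langle g\rangle$. Converting this non-interleaving into the sharp one-period bound is exactly where I must refine the convex-region argument of \cite{CZ11}, and I expect this to be \textbf{the main obstacle}: a priori two disjoint curves can meet a third curve such as $\tilde{c}$ in wildly different patterns, and the bound genuinely fails without both the convexity of the regions in $\mathscr{R}_{\tilde{u}_0}$ and the filling property of $\tilde{c}$ to force coherence of the excursions out of the fiber.

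Granting the per-edge bound, telescoping over the $n$ edges yields $m=\nu(w_n)-\nu(w_0)\leq n$, hence $d_{\mathcal{C}}(u_0,f^m(u_0))=n\geq m$. To upgrade this to $n\geq m+1$ I would rule out the extremal case $n=m$: equality forces every edge to advance the winding by exactly one, so each disjoint pair $w_j,w_{j+1}$ realizes a full single period of $\mathrm{axis}(g)$. Tracing this back through $\chi$ and the adjacency criterion of \S2.2 shows such a slope-one step is possible only when the axis crosses a single lift per period, i.e. only when $i(\tilde{c},\tilde{u}_0)=1$; the standing hypothesis $i(\tilde{c},\tilde{u}_0)\geq2$ (already seen for $m=1$ to prevent $\Omega_0$ and $g\Omega_0$ from being adjacent) contradicts this. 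Therefore $n\neq m$, and $d_{\mathcal{C}}(u_0,f^m(u_0))\geq m+1$, as claimed.
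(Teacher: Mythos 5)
Your proposal has the right geometric intuition (progress along $\mathrm{axis}(g)$ measured against lifts of $\tilde{u}_0$), and its skeleton --- a per-edge progress bound followed by an endpoint refinement --- parallels the paper's. But as written it has genuine gaps, and you flag the biggest one yourself. First, the winding function $\nu$ is not actually well defined by what you write: the intermediate vertices $w_j$ of a geodesic need not lie in $F_{\tilde{u}_0}$, the curve $\tilde{w}_j$ may cross $\mathrm{axis}(g)$ many times in different fundamental domains of $\langle g\rangle$, and ``choosing these crossing lifts coherently'' is precisely the content that needs an argument; no construction is given. Second, the crux $|\nu(w_{j+1})-\nu(w_j)|\le 1$ is explicitly left unproven (your ``main obstacle''), so the proof never gets off the ground: unlinking of disjoint geodesics alone does not bound how far a disjoint neighbor can jump along the axis. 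Third, the extremal-case step is flawed: you invoke ``the adjacency criterion of \S 2.2'' to conclude that a slope-one step forces $i(\tilde{c},\tilde{u}_0)=1$, but that criterion (Lemma 2.1 of \cite{CZ11}) compares two regions in $\mathscr{R}_{\tilde{u}_0}$, i.e.\ two vertices in the \emph{same} fiber $F_{\tilde{u}_0}$; it says nothing about an edge $w_jw_{j+1}$ whose endpoints project to curves on $\tilde{S}$ other than $\tilde{u}_0$. So even granting the Lipschitz bound, the upgrade from $n\ge m$ to $n\ge m+1$ is not established.

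For comparison, the paper's proof avoids both difficulties by never assigning a progress integer to intermediate vertices. It uses the hypothesis $i(\tilde{c},\tilde{u}_0)=N\ge 2$ to insert \emph{two} disjoint level geodesics per period of $g$ (the translates $g^j(\partial\Delta_0^*)$ and $g^j(\partial\Delta_0)$, all of which are lifts of $\tilde{u}_0$), and replaces your $\nu$ by the coarse relation ``$\Omega_j$ is located above or at level $k$,'' which is defined for every non-preperipheral vertex through its configuration $(\tau_j,\Omega_j,\mathscr{U}_j)$. An induction (using Lemma 3.1 of \cite{CZ10} and Lemma 2.1 of \cite{CZ7}) shows $\Omega_j$ is located above or at level $2(j-1)$, and then the two-level gap per period --- which exists exactly because $N\ge2$ --- forces the terminal vertex $u_s$ with $s\le m-1$ to intersect $u_m$ (Lemma 4 of \cite{CZ1}), giving $s\ge m$ and hence $d_{\mathcal{C}}(u_0,f^m(u_0))\ge m+1$ directly, with no separate extremal-case analysis. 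If you want to salvage your route, both the per-edge bound and the endpoint step would have to be carried out with half-plane/configuration arguments of exactly this kind, at which point you have essentially reconstructed the paper's proof.
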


\begin{proof}
Since $g$ is an essential hyperbolic element of $G$, by Lemma 3.1 of \cite{CZ6}, $\mbox{axis}(g)$ is not contained in $\Omega_0$, which implies that there exist maximal elements $\Delta_0, \Delta_0^*\in \mathscr{U}_0$ such that $\mbox{axis}(g)$ intersects $\partial \Delta_0$ and $\partial \Delta_0^*$. Let $A,B$ denote the attracting and 
repelling fixed points of $g$. $\Delta_0$ and $\Delta_0^*$ are disjoint. Assume that $A\in \Delta_0\cap \mathbf{S}^1$ and $B\in \Delta_0^*\cap \mathbf{S}^1$. We know that $\Omega_0\subset \mathbf{H}\backslash \overline{\Delta_0\cup \Delta_0^*}$. Write $\overline{P_0Q_0}=\partial \Delta_0$. We refer to Figure 1, where $\Delta_0$ is the component of $\mathbf{H}\backslash \overline{P_0Q_0}$ containing $A$. 

Note that $\varrho(\mbox{axis}(g))=\tilde{c}$. The assumption that $i(\tilde{c},\tilde{u}_0)=N$, where $N\geq 2$, says that $\overline{P_1Q_1}=g(\partial \Delta_0^*)$ is disjoint from $\overline{P_0Q_0}$ and 
``lies below" $\overline{P_0Q_0}$. Let $R_0$ be the region bounded by $\overline{P_0Q_0}$ and $\overline{P_1Q_1}$. Observe that the geodesic $\mbox{axis}(g)$ inherits a natural orientation that points from $B$ to $A$. Now consider a point $z\in \mbox{axis}(g)$ moving  from $B$ to $A$ along $\mbox{axis}(g)$. When $z$
starts entering the region $R_0$, it crosses $N-1$ disjoint geodesics in $\{\varrho^{-1}(\tilde{u}_0)\}$ and then crosses $\overline{P_1Q_1}$ and leaves the region $R_0$. Of course, careful investigations on the $N-1$ geodesics and their relative positions are interesting but not needed in this paper. 

 For $j=1,\cdots, m$, we denote by 
\begin{equation}\label{K1}
\overline{P_{2j-1}Q_{2j-1}}=g^j(\partial \Delta_0^*) \ \ \mbox{and} \ \ \Delta_{2j-1}'=g^j(\Delta_0^*),
\end{equation}
and for $j=1,\cdots, m-1$, we let
\begin{equation}\label{K2}
\overline{P_{2j}Q_{2j}}=g^j(\overline{P_0Q_0}).
\end{equation} 
Let $\Delta_{2j}'$ be the component of  $\mathbf{H}\backslash \overline{P_{2j}Q_{2j}}$ containing the repelling fixed point $B$ of $g$. Then all $\overline{P_kQ_k}\in \{\varrho^{-1}(\tilde{u}_0)\}$; that is, $\varrho(\overline{P_kQ_k})=\tilde{u}_0$ for all $k=0,\cdots, 2m-1$.

\bigskip

\unitlength 1mm 
\linethickness{0.4pt}
\ifx\plotpoint\undefined\newsavebox{\plotpoint}\fi 
\begin{picture}(93.75,92)(0,0)
\put(90.919,54){\line(0,1){1.3336}}
\put(90.894,55.334){\line(0,1){1.3317}}
\multiput(90.819,56.665)(-.03135,.33198){4}{\line(0,1){.33198}}
\multiput(90.693,57.993)(-.029217,.220374){6}{\line(0,1){.220374}}
\multiput(90.518,59.315)(-.032138,.187815){7}{\line(0,1){.187815}}
\multiput(90.293,60.63)(-.030478,.145034){9}{\line(0,1){.145034}}
\multiput(90.019,61.935)(-.032326,.129405){10}{\line(0,1){.129405}}
\multiput(89.695,63.229)(-.030979,.106747){12}{\line(0,1){.106747}}
\multiput(89.324,64.51)(-.0322859,.097389){13}{\line(0,1){.097389}}
\multiput(88.904,65.776)(-.0333635,.0892397){14}{\line(0,1){.0892397}}
\multiput(88.437,67.026)(-.0321123,.0769302){16}{\line(0,1){.0769302}}
\multiput(87.923,68.257)(-.0329281,.0712156){17}{\line(0,1){.0712156}}
\multiput(87.363,69.467)(-.0336091,.0660406){18}{\line(0,1){.0660406}}
\multiput(86.758,70.656)(-.0324646,.0582555){20}{\line(0,1){.0582555}}
\multiput(86.109,71.821)(-.0329857,.0542779){21}{\line(0,1){.0542779}}
\multiput(85.416,72.961)(-.0334148,.0505885){22}{\line(0,1){.0505885}}
\multiput(84.681,74.074)(-.0323545,.0451867){24}{\line(0,1){.0451867}}
\multiput(83.905,75.159)(-.0326716,.042179){25}{\line(0,1){.042179}}
\multiput(83.088,76.213)(-.0329198,.0393451){26}{\line(0,1){.0393451}}
\multiput(82.232,77.236)(-.0331046,.0366675){27}{\line(0,1){.0366675}}
\multiput(81.338,78.226)(-.0332309,.0341309){28}{\line(0,1){.0341309}}
\multiput(80.408,79.182)(-.0344924,.0328555){28}{\line(-1,0){.0344924}}
\multiput(79.442,80.102)(-.0370274,.0327014){27}{\line(-1,0){.0370274}}
\multiput(78.442,80.985)(-.0397029,.0324873){26}{\line(-1,0){.0397029}}
\multiput(77.41,81.829)(-.0443061,.0335502){24}{\line(-1,0){.0443061}}
\multiput(76.346,82.634)(-.0475178,.0332434){23}{\line(-1,0){.0475178}}
\multiput(75.254,83.399)(-.0509511,.0328594){22}{\line(-1,0){.0509511}}
\multiput(74.133,84.122)(-.0546356,.0323899){21}{\line(-1,0){.0546356}}
\multiput(72.985,84.802)(-.0616918,.0335004){19}{\line(-1,0){.0616918}}
\multiput(71.813,85.439)(-.0664043,.0328846){18}{\line(-1,0){.0664043}}
\multiput(70.618,86.031)(-.0715716,.032147){17}{\line(-1,0){.0715716}}
\multiput(69.401,86.577)(-.0824287,.0333534){15}{\line(-1,0){.0824287}}
\multiput(68.165,87.077)(-.0895994,.0323852){14}{\line(-1,0){.0895994}}
\multiput(66.91,87.531)(-.0977364,.0312185){13}{\line(-1,0){.0977364}}
\multiput(65.64,87.937)(-.116814,.032519){11}{\line(-1,0){.116814}}
\multiput(64.355,88.294)(-.129751,.030908){10}{\line(-1,0){.129751}}
\multiput(63.057,88.603)(-.163528,.032501){8}{\line(-1,0){.163528}}
\multiput(61.749,88.863)(-.188156,.030081){7}{\line(-1,0){.188156}}
\multiput(60.432,89.074)(-.264817,.032165){5}{\line(-1,0){.264817}}
\put(59.108,89.235){\line(-1,0){1.3292}}
\put(57.779,89.346){\line(-1,0){1.3324}}
\put(56.446,89.406){\line(-1,0){1.3338}}
\put(55.113,89.417){\line(-1,0){1.3332}}
\put(53.779,89.377){\line(-1,0){1.3308}}
\multiput(52.449,89.287)(-.265291,-.027983){5}{\line(-1,0){.265291}}
\multiput(51.122,89.147)(-.220041,-.031626){6}{\line(-1,0){.220041}}
\multiput(49.802,88.958)(-.164021,-.029917){8}{\line(-1,0){.164021}}
\multiput(48.49,88.718)(-.144692,-.032063){9}{\line(-1,0){.144692}}
\multiput(47.187,88.43)(-.129044,-.033739){10}{\line(-1,0){.129044}}
\multiput(45.897,88.092)(-.106402,-.032145){12}{\line(-1,0){.106402}}
\multiput(44.62,87.707)(-.0970299,-.0333494){13}{\line(-1,0){.0970299}}
\multiput(43.359,87.273)(-.0829447,-.0320486){15}{\line(-1,0){.0829447}}
\multiput(42.115,86.792)(-.0765743,-.032952){16}{\line(-1,0){.0765743}}
\multiput(40.889,86.265)(-.0708511,-.0337052){17}{\line(-1,0){.0708511}}
\multiput(39.685,85.692)(-.0622127,-.0325228){19}{\line(-1,0){.0622127}}
\multiput(38.503,85.074)(-.0578968,-.0331){20}{\line(-1,0){.0578968}}
\multiput(37.345,84.412)(-.0539138,-.0335776){21}{\line(-1,0){.0539138}}
\multiput(36.213,83.707)(-.0480365,-.0324895){23}{\line(-1,0){.0480365}}
\multiput(35.108,82.96)(-.04483,-.0328469){24}{\line(-1,0){.04483}}
\multiput(34.032,82.171)(-.041819,-.0331311){25}{\line(-1,0){.041819}}
\multiput(32.987,81.343)(-.0389826,-.0333482){26}{\line(-1,0){.0389826}}
\multiput(31.973,80.476)(-.0363031,-.0335037){27}{\line(-1,0){.0363031}}
\multiput(30.993,79.572)(-.0337653,-.0336023){28}{\line(-1,0){.0337653}}
\multiput(30.047,78.631)(-.033679,-.0361405){27}{\line(0,-1){.0361405}}
\multiput(29.138,77.655)(-.0335365,-.0388208){26}{\line(0,-1){.0388208}}
\multiput(28.266,76.646)(-.0333331,-.0416582){25}{\line(0,-1){.0416582}}
\multiput(27.433,75.604)(-.0330635,-.0446705){24}{\line(0,-1){.0446705}}
\multiput(26.639,74.532)(-.0327215,-.0478787){23}{\line(0,-1){.0478787}}
\multiput(25.887,73.431)(-.0323,-.0513075){22}{\line(0,-1){.0513075}}
\multiput(25.176,72.302)(-.0333798,-.057736){20}{\line(0,-1){.057736}}
\multiput(24.508,71.147)(-.0328235,-.0620546){19}{\line(0,-1){.0620546}}
\multiput(23.885,69.968)(-.0321562,-.0667601){18}{\line(0,-1){.0667601}}
\multiput(23.306,68.767)(-.0333222,-.0764139){16}{\line(0,-1){.0764139}}
\multiput(22.773,67.544)(-.0324496,-.0827887){15}{\line(0,-1){.0827887}}
\multiput(22.286,66.302)(-.031403,-.0899483){14}{\line(0,-1){.0899483}}
\multiput(21.846,65.043)(-.03266,-.106245){12}{\line(0,-1){.106245}}
\multiput(21.455,63.768)(-.03124,-.117163){11}{\line(0,-1){.117163}}
\multiput(21.111,62.479)(-.032763,-.144535){9}{\line(0,-1){.144535}}
\multiput(20.816,61.178)(-.03071,-.163874){8}{\line(0,-1){.163874}}
\multiput(20.57,59.867)(-.032691,-.219886){6}{\line(0,-1){.219886}}
\multiput(20.374,58.548)(-.029266,-.265153){5}{\line(0,-1){.265153}}
\put(20.228,57.222){\line(0,-1){1.3303}}
\put(20.132,55.892){\line(0,-1){2.6668}}
\put(20.089,53.225){\line(0,-1){1.3327}}
\put(20.144,51.892){\line(0,-1){1.3297}}
\multiput(20.248,50.563)(.030884,-.264969){5}{\line(0,-1){.264969}}
\multiput(20.403,49.238)(.02917,-.188299){7}{\line(0,-1){.188299}}
\multiput(20.607,47.92)(.031709,-.163684){8}{\line(0,-1){.163684}}
\multiput(20.86,46.61)(.033644,-.144332){9}{\line(0,-1){.144332}}
\multiput(21.163,45.311)(.031954,-.11697){11}{\line(0,-1){.11697}}
\multiput(21.515,44.025)(.033307,-.106043){12}{\line(0,-1){.106043}}
\multiput(21.914,42.752)(.0319512,-.089755){14}{\line(0,-1){.089755}}
\multiput(22.362,41.495)(.0329541,-.0825892){15}{\line(0,-1){.0825892}}
\multiput(22.856,40.257)(.0318003,-.0717263){17}{\line(0,-1){.0717263}}
\multiput(23.397,39.037)(.0325629,-.0665627){18}{\line(0,-1){.0665627}}
\multiput(23.983,37.839)(.0332014,-.0618532){19}{\line(0,-1){.0618532}}
\multiput(24.614,36.664)(.0337314,-.0575313){20}{\line(0,-1){.0575313}}
\multiput(25.288,35.513)(.0326124,-.0511095){22}{\line(0,-1){.0511095}}
\multiput(26.006,34.389)(.033013,-.0476781){23}{\line(0,-1){.0476781}}
\multiput(26.765,33.292)(.0333354,-.044468){24}{\line(0,-1){.044468}}
\multiput(27.565,32.225)(.0335866,-.041454){25}{\line(0,-1){.041454}}
\multiput(28.405,31.189)(.0325219,-.0371852){27}{\line(0,-1){.0371852}}
\multiput(29.283,30.185)(.0326882,-.034651){28}{\line(0,-1){.034651}}
\multiput(30.198,29.215)(.0339697,-.0333957){28}{\line(1,0){.0339697}}
\multiput(31.149,28.279)(.0365068,-.0332816){27}{\line(1,0){.0365068}}
\multiput(32.135,27.381)(.0391853,-.0331098){26}{\line(1,0){.0391853}}
\multiput(33.154,26.52)(.0420204,-.0328753){25}{\line(1,0){.0420204}}
\multiput(34.204,25.698)(.0450296,-.0325728){24}{\line(1,0){.0450296}}
\multiput(35.285,24.916)(.0504262,-.0336592){22}{\line(1,0){.0504262}}
\multiput(36.394,24.176)(.0541177,-.033248){21}{\line(1,0){.0541177}}
\multiput(37.531,23.478)(.0580977,-.0327462){20}{\line(1,0){.0580977}}
\multiput(38.693,22.823)(.0624099,-.0321426){19}{\line(1,0){.0624099}}
\multiput(39.879,22.212)(.0710554,-.0332723){17}{\line(1,0){.0710554}}
\multiput(41.087,21.646)(.0767739,-.0324842){16}{\line(1,0){.0767739}}
\multiput(42.315,21.127)(.0831387,-.0315419){15}{\line(1,0){.0831387}}
\multiput(43.562,20.653)(.0972316,-.0327568){13}{\line(1,0){.0972316}}
\multiput(44.826,20.228)(.106596,-.031495){12}{\line(1,0){.106596}}
\multiput(46.105,19.85)(.129247,-.032952){10}{\line(1,0){.129247}}
\multiput(47.398,19.52)(.144885,-.03118){9}{\line(1,0){.144885}}
\multiput(48.702,19.24)(.187657,-.033046){7}{\line(1,0){.187657}}
\multiput(50.015,19.008)(.22023,-.030283){6}{\line(1,0){.22023}}
\multiput(51.337,18.827)(.33182,-.03295){4}{\line(1,0){.33182}}
\put(52.664,18.695){\line(1,0){1.3313}}
\put(53.995,18.613){\line(1,0){1.3334}}
\put(55.329,18.581){\line(1,0){1.3337}}
\put(56.662,18.6){\line(1,0){1.332}}
\put(57.994,18.669){\line(1,0){1.3285}}
\multiput(59.323,18.788)(.220513,.02815){6}{\line(1,0){.220513}}
\multiput(60.646,18.957)(.187969,.031228){7}{\line(1,0){.187969}}
\multiput(61.962,19.175)(.163327,.033498){8}{\line(1,0){.163327}}
\multiput(63.268,19.443)(.12956,.031699){10}{\line(1,0){.12956}}
\multiput(64.564,19.76)(.116613,.033232){11}{\line(1,0){.116613}}
\multiput(65.847,20.126)(.0975441,.0318143){13}{\line(1,0){.0975441}}
\multiput(67.115,20.54)(.0894001,.0329312){14}{\line(1,0){.0894001}}
\multiput(68.366,21.001)(.0770847,.0317397){16}{\line(1,0){.0770847}}
\multiput(69.6,21.508)(.0713741,.0325831){17}{\line(1,0){.0713741}}
\multiput(70.813,22.062)(.0662025,.0332891){18}{\line(1,0){.0662025}}
\multiput(72.005,22.662)(.0584119,.0321823){20}{\line(1,0){.0584119}}
\multiput(73.173,23.305)(.0544369,.0327227){21}{\line(1,0){.0544369}}
\multiput(74.316,23.992)(.0507496,.0331696){22}{\line(1,0){.0507496}}
\multiput(75.433,24.722)(.0473141,.0335327){23}{\line(1,0){.0473141}}
\multiput(76.521,25.493)(.0423366,.0324671){25}{\line(1,0){.0423366}}
\multiput(77.579,26.305)(.039504,.032729){26}{\line(1,0){.039504}}
\multiput(78.606,27.156)(.0368272,.0329267){27}{\line(1,0){.0368272}}
\multiput(79.601,28.045)(.0342913,.0330653){28}{\line(1,0){.0342913}}
\multiput(80.561,28.971)(.033022,.034333){28}{\line(0,1){.034333}}
\multiput(81.485,29.932)(.0328802,.0368687){27}{\line(0,1){.0368687}}
\multiput(82.373,30.928)(.0326791,.0395452){26}{\line(0,1){.0395452}}
\multiput(83.223,31.956)(.0324137,.0423775){25}{\line(0,1){.0423775}}
\multiput(84.033,33.015)(.0334729,.0473564){23}{\line(0,1){.0473564}}
\multiput(84.803,34.104)(.0331055,.0507914){22}{\line(0,1){.0507914}}
\multiput(85.531,35.222)(.032654,.0544782){21}{\line(0,1){.0544782}}
\multiput(86.217,36.366)(.0321086,.0584525){20}{\line(0,1){.0584525}}
\multiput(86.859,37.535)(.0332056,.0662444){18}{\line(0,1){.0662444}}
\multiput(87.457,38.727)(.032493,.0714152){17}{\line(0,1){.0714152}}
\multiput(88.009,39.941)(.0316424,.0771247){16}{\line(0,1){.0771247}}
\multiput(88.516,41.175)(.0328184,.0894416){14}{\line(0,1){.0894416}}
\multiput(88.975,42.428)(.0316912,.0975841){13}{\line(0,1){.0975841}}
\multiput(89.387,43.696)(.033084,.116655){11}{\line(0,1){.116655}}
\multiput(89.751,44.979)(.031536,.1296){10}{\line(0,1){.1296}}
\multiput(90.066,46.275)(.033292,.163369){8}{\line(0,1){.163369}}
\multiput(90.333,47.582)(.030991,.188008){7}{\line(0,1){.188008}}
\multiput(90.55,48.898)(.033447,.264658){5}{\line(0,1){.264658}}
\put(90.717,50.222){\line(0,1){1.3286}}
\put(90.834,51.55){\line(0,1){2.4497}}
\put(55.5,89.5){\line(0,-1){71}}
\qbezier(20.75,60.75)(54.625,54.25)(90,60.75)
\qbezier(20.25,55)(54,53.875)(90.75,55.25)
\qbezier(22.5,41.5)(55.125,51.875)(87.25,39.75)
\qbezier(24.5,37.25)(55.75,48.25)(85,35.25)
\qbezier(31.75,28.25)(55.125,42.25)(76,26.25)
\qbezier(35.25,25.25)(56,38.5)(72.75,23.75)
\qbezier(43.75,20.5)(56,32.625)(65.25,20.25)
\qbezier(47.5,19.75)(56.125,28.25)(62.25,19.75)
\qbezier(25.25,72.75)(55.125,59)(85.5,72.25)
\qbezier(28,76.25)(54.875,63.125)(83.25,75.5)
\qbezier(34.75,83)(55.25,64.875)(76.75,82.25)
\put(78.5,83.75){\makebox(0,0)[cc]{$Q_1$}}
\put(85.5,77.5){\makebox(0,0)[cc]{$Q_2$}}
\put(88,73.5){\makebox(0,0)[cc]{$Q_3$}}
\put(92.75,62){\makebox(0,0)[cc]{$Q_4$}}
\put(93.75,55.25){\makebox(0,0)[cc]{$Q_5$}}
\put(91.25,37.75){\makebox(0,0)[cc]{$Q_6$}}
\put(88,32.75){\makebox(0,0)[cc]{$Q_7$}}
\put(64,15){\makebox(0,0)[cc]{$Q_{2m-1}$}}
\put(70.25,17.25){\makebox(0,0)[cc]{$Q_{2m-2}$}}
\put(55.5,92){\makebox(0,0)[cc]{$B$}}
\put(55.25,15.25){\makebox(0,0)[cc]{$A$}}
\put(33.5,85){\makebox(0,0)[cc]{$P_1$}}
\put(26.5,77.75){\makebox(0,0)[cc]{$P_2$}}
\put(23.25,74){\makebox(0,0)[cc]{$P_3$}}
\put(18.25,61.75){\makebox(0,0)[cc]{$P_4$}}
\put(17.75,54.75){\makebox(0,0)[cc]{$P_5$}}
\put(19.75,40.5){\makebox(0,0)[cc]{$P_6$}}
\put(21.75,36){\makebox(0,0)[cc]{$P_7$}}
\put(39.5,19.25){\makebox(0,0)[cc]{$P_{2m-2}$}}
\put(47,15.75){\makebox(0,0)[cc]{$P_{2m-1}$}}
\put(32,77){\makebox(0,0)[cc]{$\Delta_2'$}}
\put(24.25,62.5){\makebox(0,0)[cc]{$\Delta_4'$}}
\put(24.75,45){\makebox(0,0)[cc]{$\Delta_6'$}}
\put(55,4.75){\makebox(0,0)[cc]{Fig. 1}}
\put(55.5,40.25){\vector(0,-1){2.25}}
\put(58,61.25){\makebox(0,0)[cc]{$g$}}
\put(87.75,57.5){\makebox(0,0)[cc]{$\Delta_5'$}}
\put(82.25,72.75){\makebox(0,0)[cc]{$\Delta_3'$}}
\put(38.25,87.75){\makebox(0,0)[cc]{$P_0$}}
\qbezier(45.75,88.25)(56,78.625)(66.25,87.5)
\qbezier(71.75,85.5)(55.75,69.25)(39.75,86)
\put(72,81.5){\makebox(0,0)[cc]{$\Delta_1'$}}
\put(73,87.75){\makebox(0,0)[cc]{$Q_0$}}
\put(51.5,86.75){\makebox(0,0)[cc]{$\Delta_0^*$}}
\put(83.75,38.5){\makebox(0,0)[cc]{$\Delta_7'$}}
\end{picture}

It is evident that all the geodesics $\overline{P_kQ_k}$, $0\leq k\leq 2m-1$, are mutually disjoint and for any $j=1,\cdots,m-1$, the geodesics $\overline{P_{2j}Q_{2j}}$ lies in between $\overline{P_{2j-1}Q_{2j-1}}$ 
and $\overline{P_{2j+1}Q_{2j+1}}$. The geodesics $\overline{P_kQ_k}$ with $1\leq k\leq 2m-1$ give rise to a partition of $\mathbf{H}$, and each one of which is referred to as a level geodesic with level $k$ in the sequel.

Let $(P_kP_{k+1})$ and $(Q_kQ_{k+1})$ denote the subarcs of $\mathbf{S}^1\backslash \{A,B\}$ connecting $P_k, P_{k+1}$ and  $Q_k, Q_{k+1}$, respectively. By examining the action of $g$ on $\mathbf{S}^1$, for $j=1,\cdots, m-2$, we have 
\begin{equation}\label{K3}
g(P_{2j-2}P_{2j})=(P_{2j}P_{2j+2}) \ \ \mbox{and}\ \ g(Q_{2j-2}Q_{2j})=(Q_{2j}Q_{2j+2}). 
\end{equation}

As usual, let $f^j(u_0)$ denote the geodesic homotopic to the image curve of $u_0$ under the map $f^j$ for all $j$. Set $u_m=f^m(u_0)$. Let $[u_0,u_1,\cdots, u_s,u_m]$ be a geodesic segment in $\mathcal{C}_1(S)$ joining $u_0$ and $u_m$. Then all $u_j$ are non-preperipheral and thus $\tilde{u}_j$ are all non-trivial simple closed geodesics on $\tilde{S}$. Let $(\tau_j, \Omega_j,\mathscr{U}_j)$ be the configurations corresponding to $u_j$. 

In what follows, the region $\Omega_j$ is said to be located above level $k$ for some $1\leq k\leq 2m-1$ if $\Omega_j\cap \Delta_k'\neq \emptyset$. Likewise, $\Omega_j$ is said to be located at level $k$ if $\Delta_k'$ is a maximal element of $\mathscr{U}_j$. See \cite{CZ10} for more detailed information.  

By construction, $\Omega_0\subset \mathbf{H}\backslash \overline{\Delta_0\cup \Delta_0^*}$ (in fact, $\mathbf{H}\backslash (\overline{\Omega_0}\cup \overline{\Delta_0\cup \Delta_0^*})$ is a disjoint union of infinitely many maximal elements of $\mathscr{U}_0$).  By a similar argument of Theorem 1.2 of \cite{CZ10} (together with Lemma 2.1 of \cite{CZ7}, (\ref{K2}) and (\ref{K3})), we know that $\Omega_1$ is located above or at level zero. By an induction argument,  one shows that for all $j=1,\cdots, s$ with $s\leq m-1$, $\Omega_j$ is located above or at level $2(j-1)$. In particular, we conclude that $\Omega_{m-1}$ is located above or at level $2(m-2)$. 

If $\Omega_{m-1}$ is located at level $2(m-2)=2m-4$, then there is a maximal element $\Delta_{m-1}\in \mathscr{U}_{m-1}$ that covers the attracting fixed point $A$ of $g$ such that $\partial \Delta_{m-1}$ lies above level $2(m-1)=2m-2$. Note that the point $P_{2m-2}$ lies in the arc $(P_{2m-3}P_{2m-1})$ and $Q_{2m-2}$ lies in the arc $(Q_{2m-3}Q_{2m-1})$. So the region bounded by the two geodesic 
$\overline{P_{2m-2}Q_{2m-2}}$ and $\overline{P_{2m-3}Q_{2m-3}}$ is not empty. By construction, $\Delta_{2m-1}'$ is the component of $\mathbf{H}\backslash \overline{P_{2m-1}Q_{2m-1}}$ containing the repelling fixed point $B$. Hence $\Delta_{m-1}\cap \Delta_{2m-1}'\neq \emptyset$ and $\Delta_{m-1}\cup \Delta_{2m-1}'=\mathbf{H}$.  Note also that the configuration 
$$
(\tau_m, \Omega_m,\mathscr{U}_m):=(g^m\tau_0g^{-m}, g^m(\Omega_0), g^m(\mathscr{U}_0))
$$ 
corresponds to $f^m(u_0)\in \mathcal{C}_0(S)$. Since $\Delta_0^*\in \mathscr{U}_0$, $g^m(\Delta_0^*)=\Delta_{2m-1}'\in g^m(\mathscr{U}_0)=\mathscr{U}_m$, we conclude that $\Delta_{2m-1}'\in \mathscr{U}_m$. By Lemma 4 of \cite{CZ1}, $u_{m-1}$ intersects $u_m=f^m(u_0)$, which implies that $d_{\mathcal{C}}(u_{m-1}, u_m)\geq 2$. Thus $s\geq m$, which says $d_{\mathcal{C}}(u_0, u_m)\geq m+1$, as asserted.  

If $\Omega_{m-1}$ is located above level $2(m-2)=2m-4$, then by Lemma 3.1 of \cite{CZ10}, there is a maximal element $\Delta_{m-1}\in \mathscr{U}_{m-1}$, which covers the attracting fixed point $A$, such that either (i) $\partial \Delta_{m-1}\cap \overline{P_{2m-2}Q_{2m-2}}\neq \emptyset$, or (ii) $\partial \Delta_{m-1}\cap \overline{P_{2m-2}Q_{2m-2}}= \emptyset$ but $\Delta_{m-1}\cup \Delta_{2m-1}'=\mathbf{H}$. If (ii) occurs, by Lemma 4 of \cite{CZ1} again, $u_{m-1}$ intersects $u_m=f^m(u_0)$, which implies that $d_{\mathcal{C}}(u_{m-1}, u_m)\geq 2$. So $s\geq m$. Suppose (i) occurs. 
We observe that $\varrho(\partial \Delta_{m-1})=\tilde{u}_{m-1}$ and $\varrho(\overline{P_{2m-2}Q_{2m-2}})=\tilde{u}_0$. 
Then $\tilde{u}_{m-1}$ intersects $\tilde{u}_{m}$. But $\tilde{u}_m=\tilde{u}_0$. This in turn implies that $u_{m-1}$ intersects $u_m$, and so $s\geq m$. 
\end{proof}


\unitlength 1mm 
\linethickness{0.4pt}
\ifx\plotpoint\undefined\newsavebox{\plotpoint}\fi 
\begin{picture}(92.5,81.75)(0,0)
\put(91.85,46){\line(0,1){1.2761}}
\put(91.825,47.276){\line(0,1){1.2742}}
\multiput(91.751,48.55)(-.03095,.31759){4}{\line(0,1){.31759}}
\multiput(91.627,49.821)(-.028842,.210765){6}{\line(0,1){.210765}}
\multiput(91.454,51.085)(-.031721,.179559){7}{\line(0,1){.179559}}
\multiput(91.232,52.342)(-.030079,.138593){9}{\line(0,1){.138593}}
\multiput(90.961,53.59)(-.031896,.123588){10}{\line(0,1){.123588}}
\multiput(90.642,54.825)(-.033339,.111141){11}{\line(0,1){.111141}}
\multiput(90.276,56.048)(-.0318419,.0928758){13}{\line(0,1){.0928758}}
\multiput(89.862,57.255)(-.0328954,.0850281){14}{\line(0,1){.0850281}}
\multiput(89.401,58.446)(-.031652,.0732253){16}{\line(0,1){.0732253}}
\multiput(88.895,59.617)(-.0324449,.0677086){17}{\line(0,1){.0677086}}
\multiput(88.343,60.768)(-.0331034,.0627084){18}{\line(0,1){.0627084}}
\multiput(87.747,61.897)(-.0336453,.0581448){19}{\line(0,1){.0581448}}
\multiput(87.108,63.002)(-.0324616,.051385){21}{\line(0,1){.051385}}
\multiput(86.426,64.081)(-.0328682,.0478085){22}{\line(0,1){.0478085}}
\multiput(85.703,65.133)(-.0331919,.0444741){23}{\line(0,1){.0444741}}
\multiput(84.94,66.156)(-.0334406,.0413531){24}{\line(0,1){.0413531}}
\multiput(84.137,67.148)(-.0336209,.0384219){25}{\line(0,1){.0384219}}
\multiput(83.297,68.109)(-.0337386,.0356604){26}{\line(0,1){.0356604}}
\multiput(82.419,69.036)(-.0337985,.0330516){27}{\line(-1,0){.0337985}}
\multiput(81.507,69.928)(-.0364053,.0329335){26}{\line(-1,0){.0364053}}
\multiput(80.56,70.785)(-.0391634,.0327541){25}{\line(-1,0){.0391634}}
\multiput(79.581,71.603)(-.0420899,.0325083){24}{\line(-1,0){.0420899}}
\multiput(78.571,72.384)(-.0472592,.0336532){22}{\line(-1,0){.0472592}}
\multiput(77.531,73.124)(-.0508419,.0333058){21}{\line(-1,0){.0508419}}
\multiput(76.464,73.823)(-.0547022,.0328708){20}{\line(-1,0){.0547022}}
\multiput(75.37,74.481)(-.058882,.0323379){19}{\line(-1,0){.058882}}
\multiput(74.251,75.095)(-.0671636,.0335585){17}{\line(-1,0){.0671636}}
\multiput(73.109,75.666)(-.0726926,.0328568){16}{\line(-1,0){.0726926}}
\multiput(71.946,76.191)(-.0788418,.0320087){15}{\line(-1,0){.0788418}}
\multiput(70.763,76.672)(-.0923373,.0333712){13}{\line(-1,0){.0923373}}
\multiput(69.563,77.105)(-.101361,.032239){12}{\line(-1,0){.101361}}
\multiput(68.347,77.492)(-.111858,.030848){11}{\line(-1,0){.111858}}
\multiput(67.116,77.832)(-.138077,.032363){9}{\line(-1,0){.138077}}
\multiput(65.874,78.123)(-.156634,.030347){8}{\line(-1,0){.156634}}
\multiput(64.621,78.366)(-.21026,.032319){6}{\line(-1,0){.21026}}
\multiput(63.359,78.56)(-.253628,.028951){5}{\line(-1,0){.253628}}
\put(62.091,78.704){\line(-1,0){1.2728}}
\put(60.818,78.8){\line(-1,0){1.2755}}
\put(59.542,78.846){\line(-1,0){1.2764}}
\put(58.266,78.842){\line(-1,0){1.2753}}
\put(56.991,78.789){\line(-1,0){1.2722}}
\multiput(55.719,78.686)(-.253455,-.030429){5}{\line(-1,0){.253455}}
\multiput(54.451,78.534)(-.210068,-.033543){6}{\line(-1,0){.210068}}
\multiput(53.191,78.332)(-.156455,-.031259){8}{\line(-1,0){.156455}}
\multiput(51.939,78.082)(-.137886,-.033167){9}{\line(-1,0){.137886}}
\multiput(50.698,77.784)(-.111677,-.031499){11}{\line(-1,0){.111677}}
\multiput(49.47,77.437)(-.101171,-.032829){12}{\line(-1,0){.101171}}
\multiput(48.256,77.043)(-.0855598,-.0314868){14}{\line(-1,0){.0855598}}
\multiput(47.058,76.602)(-.0786539,-.0324677){15}{\line(-1,0){.0786539}}
\multiput(45.878,76.115)(-.0724999,-.03328){16}{\line(-1,0){.0724999}}
\multiput(44.718,75.583)(-.0632465,-.0320634){18}{\line(-1,0){.0632465}}
\multiput(43.58,75.006)(-.0586925,-.0326805){19}{\line(-1,0){.0586925}}
\multiput(42.465,74.385)(-.0545097,-.0331891){20}{\line(-1,0){.0545097}}
\multiput(41.374,73.721)(-.0506469,-.0336015){21}{\line(-1,0){.0506469}}
\multiput(40.311,73.015)(-.0450161,-.0324529){23}{\line(-1,0){.0450161}}
\multiput(39.275,72.269)(-.0418997,-.0327531){24}{\line(-1,0){.0418997}}
\multiput(38.27,71.483)(-.0389719,-.0329818){25}{\line(-1,0){.0389719}}
\multiput(37.296,70.658)(-.0362127,-.0331451){26}{\line(-1,0){.0362127}}
\multiput(36.354,69.797)(-.0336053,-.0332481){27}{\line(-1,0){.0336053}}
\multiput(35.447,68.899)(-.0335301,-.0358565){26}{\line(0,-1){.0358565}}
\multiput(34.575,67.967)(-.0333964,-.0386172){25}{\line(0,-1){.0386172}}
\multiput(33.74,67.001)(-.033199,-.0415473){24}{\line(0,-1){.0415473}}
\multiput(32.943,66.004)(-.0329321,-.0446668){23}{\line(0,-1){.0446668}}
\multiput(32.186,64.977)(-.0325889,-.0479993){22}{\line(0,-1){.0479993}}
\multiput(31.469,63.921)(-.0321616,-.0515733){21}{\line(0,-1){.0515733}}
\multiput(30.793,62.838)(-.0333058,-.0583399){19}{\line(0,-1){.0583399}}
\multiput(30.161,61.729)(-.0327373,-.0629003){18}{\line(0,-1){.0629003}}
\multiput(29.571,60.597)(-.0320497,-.0678966){17}{\line(0,-1){.0678966}}
\multiput(29.026,59.443)(-.0333063,-.0783024){15}{\line(0,-1){.0783024}}
\multiput(28.527,58.268)(-.0323992,-.0852184){14}{\line(0,-1){.0852184}}
\multiput(28.073,57.075)(-.0313,-.0930598){13}{\line(0,-1){.0930598}}
\multiput(27.666,55.866)(-.032691,-.111334){11}{\line(0,-1){.111334}}
\multiput(27.307,54.641)(-.031175,-.123772){10}{\line(0,-1){.123772}}
\multiput(26.995,53.403)(-.032929,-.156112){8}{\line(0,-1){.156112}}
\multiput(26.732,52.154)(-.030674,-.179741){7}{\line(0,-1){.179741}}
\multiput(26.517,50.896)(-.033136,-.253115){5}{\line(0,-1){.253115}}
\put(26.351,49.63){\line(0,-1){1.2711}}
\put(26.235,48.359){\line(0,-1){1.2746}}
\put(26.168,47.085){\line(0,-1){1.2763}}
\put(26.151,45.809){\line(0,-1){1.276}}
\put(26.183,44.533){\line(0,-1){1.2738}}
\multiput(26.265,43.259)(.0328,-.3174){4}{\line(0,-1){.3174}}
\multiput(26.396,41.989)(.03007,-.210593){6}{\line(0,-1){.210593}}
\multiput(26.576,40.726)(.032767,-.179371){7}{\line(0,-1){.179371}}
\multiput(26.806,39.47)(.030886,-.138415){9}{\line(0,-1){.138415}}
\multiput(27.084,38.224)(.032616,-.1234){10}{\line(0,-1){.1234}}
\multiput(27.41,36.99)(.031154,-.1017){12}{\line(0,-1){.1017}}
\multiput(27.784,35.77)(.0323826,-.0926887){13}{\line(0,-1){.0926887}}
\multiput(28.205,34.565)(.0333904,-.084835){14}{\line(0,-1){.084835}}
\multiput(28.672,33.377)(.0320782,-.0730396){16}{\line(0,-1){.0730396}}
\multiput(29.185,32.209)(.0328389,-.0675184){17}{\line(0,-1){.0675184}}
\multiput(29.743,31.061)(.0334683,-.0625144){18}{\line(0,-1){.0625144}}
\multiput(30.346,29.936)(.0322844,-.0550504){20}{\line(0,-1){.0550504}}
\multiput(30.992,28.835)(.0327606,-.0511949){21}{\line(0,-1){.0511949}}
\multiput(31.68,27.759)(.0331462,-.0476161){22}{\line(0,-1){.0476161}}
\multiput(32.409,26.712)(.0334505,-.0442798){23}{\line(0,-1){.0442798}}
\multiput(33.178,25.693)(.033681,-.0411575){24}{\line(0,-1){.0411575}}
\multiput(33.987,24.706)(.0325426,-.0367551){26}{\line(0,-1){.0367551}}
\multiput(34.833,23.75)(.0326886,-.0341497){27}{\line(0,-1){.0341497}}
\multiput(35.715,22.828)(.0339905,-.0328541){27}{\line(1,0){.0339905}}
\multiput(36.633,21.941)(.0365966,-.0327207){26}{\line(1,0){.0365966}}
\multiput(37.584,21.09)(.0393537,-.0325253){25}{\line(1,0){.0393537}}
\multiput(38.568,20.277)(.0441169,-.0336652){23}{\line(1,0){.0441169}}
\multiput(39.583,19.503)(.0474546,-.0333771){22}{\line(1,0){.0474546}}
\multiput(40.627,18.768)(.0510352,-.0330089){21}{\line(1,0){.0510352}}
\multiput(41.699,18.075)(.0548929,-.0325514){20}{\line(1,0){.0548929}}
\multiput(42.797,17.424)(.0590695,-.0319941){19}{\line(1,0){.0590695}}
\multiput(43.919,16.816)(.0673581,-.0331665){17}{\line(1,0){.0673581}}
\multiput(45.064,16.253)(.0728829,-.0324326){16}{\line(1,0){.0728829}}
\multiput(46.23,15.734)(.079027,-.0315486){15}{\line(1,0){.079027}}
\multiput(47.416,15.26)(.0925303,-.0328325){13}{\line(1,0){.0925303}}
\multiput(48.618,14.834)(.101547,-.031648){12}{\line(1,0){.101547}}
\multiput(49.837,14.454)(.12324,-.033215){10}{\line(1,0){.12324}}
\multiput(51.069,14.122)(.138264,-.031558){9}{\line(1,0){.138264}}
\multiput(52.314,13.838)(.17921,-.033638){7}{\line(1,0){.17921}}
\multiput(53.568,13.602)(.210444,-.031092){6}{\line(1,0){.210444}}
\multiput(54.831,13.416)(.253792,-.027473){5}{\line(1,0){.253792}}
\put(56.1,13.278){\line(1,0){1.2733}}
\put(57.373,13.19){\line(1,0){1.2758}}
\put(58.649,13.152){\line(1,0){1.2763}}
\put(59.925,13.163){\line(1,0){1.2749}}
\put(61.2,13.224){\line(1,0){1.2716}}
\multiput(62.472,13.334)(.253273,.031906){5}{\line(1,0){.253273}}
\multiput(63.738,13.494)(.179887,.029801){7}{\line(1,0){.179887}}
\multiput(64.997,13.702)(.15627,.03217){8}{\line(1,0){.15627}}
\multiput(66.248,13.959)(.123922,.030573){10}{\line(1,0){.123922}}
\multiput(67.487,14.265)(.111491,.032149){11}{\line(1,0){.111491}}
\multiput(68.713,14.619)(.100978,.033418){12}{\line(1,0){.100978}}
\multiput(69.925,15.02)(.0853748,.0319849){14}{\line(1,0){.0853748}}
\multiput(71.12,15.468)(.0784633,.0329255){15}{\line(1,0){.0784633}}
\multiput(72.297,15.962)(.0723047,.033702){16}{\line(1,0){.0723047}}
\multiput(73.454,16.501)(.0630586,.0324314){18}{\line(1,0){.0630586}}
\multiput(74.589,17.085)(.058501,.0330221){19}{\line(1,0){.058501}}
\multiput(75.701,17.712)(.0543153,.0335063){20}{\line(1,0){.0543153}}
\multiput(76.787,18.382)(.048157,.0323554){22}{\line(1,0){.048157}}
\multiput(77.846,19.094)(.0448262,.0327147){23}{\line(1,0){.0448262}}
\multiput(78.877,19.846)(.0417081,.0329968){24}{\line(1,0){.0417081}}
\multiput(79.878,20.638)(.038779,.0332084){25}{\line(1,0){.038779}}
\multiput(80.848,21.468)(.0360189,.0333556){26}{\line(1,0){.0360189}}
\multiput(81.784,22.336)(.0334109,.0334434){27}{\line(0,1){.0334434}}
\multiput(82.686,23.239)(.0333206,.0360513){26}{\line(0,1){.0360513}}
\multiput(83.553,24.176)(.0331707,.0388112){25}{\line(0,1){.0388112}}
\multiput(84.382,25.146)(.0329562,.0417401){24}{\line(0,1){.0417401}}
\multiput(85.173,26.148)(.0326712,.044858){23}{\line(0,1){.044858}}
\multiput(85.924,27.18)(.0323086,.0481884){22}{\line(0,1){.0481884}}
\multiput(86.635,28.24)(.0334535,.0543479){20}{\line(0,1){.0543479}}
\multiput(87.304,29.327)(.0329652,.0585331){19}{\line(0,1){.0585331}}
\multiput(87.931,30.439)(.0323702,.06309){18}{\line(0,1){.06309}}
\multiput(88.513,31.575)(.0336317,.0723374){16}{\line(0,1){.0723374}}
\multiput(89.051,32.732)(.0328493,.0784952){15}{\line(0,1){.0784952}}
\multiput(89.544,33.909)(.031902,.0854058){14}{\line(0,1){.0854058}}
\multiput(89.991,35.105)(.03332,.101011){12}{\line(0,1){.101011}}
\multiput(90.391,36.317)(.032041,.111522){11}{\line(0,1){.111522}}
\multiput(90.743,37.544)(.030453,.123951){10}{\line(0,1){.123951}}
\multiput(91.048,38.784)(.032019,.156301){8}{\line(0,1){.156301}}
\multiput(91.304,40.034)(.029626,.179916){7}{\line(0,1){.179916}}
\multiput(91.511,41.293)(.03166,.253304){5}{\line(0,1){.253304}}
\put(91.669,42.56){\line(0,1){1.2717}}
\put(91.778,43.832){\line(0,1){2.1684}}
\qbezier(46.25,76.5)(59.5,61.75)(73.75,75)
\qbezier(30.5,62.75)(59.25,45.875)(88,61.5)
\qbezier(27.25,37.75)(59.625,45.625)(89.5,35)
\qbezier(34.5,24.25)(59.25,43.75)(82,23.25)
\qbezier(42.5,18)(60.375,39.25)(73.75,17.5)
\qbezier(49.5,14.75)(59.125,33.625)(68.25,15)
\put(28.25,64.25){\makebox(0,0)[cc]{$P_1$}}
\put(90.5,63){\makebox(0,0)[cc]{$Q_1$}}
\put(24.25,36.75){\makebox(0,0)[cc]{$P_2$}}
\put(92.5,33.5){\makebox(0,0)[cc]{$Q_2$}}
\put(32.25,22.5){\makebox(0,0)[cc]{$P_3$}}
\put(84,21){\makebox(0,0)[cc]{$Q_3$}}
\put(48.5,12){\makebox(0,0)[cc]{$P_m$}}
\put(69.5,12){\makebox(0,0)[cc]{$Q_m$}}
\put(40.5,15.5){\makebox(0,0)[cc]{$P_{m-1}$}}
\put(75.75,13.25){\makebox(0,0)[cc]{$Q_{m-1}$}}
\put(59.25,9.75){\makebox(0,0)[cc]{$A$}}
\put(59.5,81.75){\makebox(0,0)[cc]{$B$}}
\put(52.25,75.25){\makebox(0,0)[cc]{$\Delta_0^*$}}
\put(34.5,63.25){\makebox(0,0)[cc]{$\Delta_1'$}}
\put(29.25,41){\makebox(0,0)[cc]{$\Delta_2'$}}
\put(35,28.25){\makebox(0,0)[cc]{$\Delta_3'$}}
\put(48.5,18.25){\makebox(0,0)[cc]{$\Delta_m'$}}
\put(41,22.5){\makebox(0,0)[cc]{$\Delta_{m-1}'$}}
\put(62.25,47.75){\makebox(0,0)[cc]{$g$}}
\put(59.25,1.5){\makebox(0,0)[cc]{Fig. 2}}
\put(59.75,78.75){\vector(0,-1){65.75}}
\end{picture}

\section{Geodesic paths in the curve complex}
\setcounter{equation}{0}

In this section, we study geodesic segments connecting $u_0$ and $u_m$, where we recall that $u_m=f^m(u_0)$ which is the geodesic homotopic to the image curve of $u_0$ under the map $f^m$. For a discussion purpose, in what follows we only need a ``coarser partition" of $\mathbf{H}$ which is described below. See also \cite{CZ10} for more details. 

Let $\Delta_0, \Delta_0^*$ and $g$ be as in Section 3. For $j=1,\cdots, m$, write $\overline{P_jQ_j}=g^j(\partial \Delta_0^*)$. These geodesics $\overline{P_jQ_j}$ are referred to as level geodesics with level $j$. As usual,  put $\Delta_j'=g^j(\Delta_0^*)$. See Figure 2.  

Let $[u_0,u_1,\cdots, u_s,u_m]$ be a path connecting $u_0$ and $u_m$. Here we emphasize that the path is not assumed to be a geodesic segment. Then all $u_j$ are non-preperipheral. Once again, let $(\tau_j,\Omega_j, \mathscr{U}_j)$, $j=0,\cdots, s,m$, be the configurations corresponding to $u_j$. 

\begin{lem}\label{L3.1}
With the above notation, if $\Omega_j$ is located above level $j$ for some $j$ with $1\leq j\leq s$, then $s\geq m$. 
\end{lem}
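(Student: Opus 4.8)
The plan is to reduce Lemma~\ref{L3.1} to a single monotonicity property of adjacent configurations and then finish by propagating that property along the path, together with the computation of $\Omega_m$ at the far endpoint. Throughout I may assume $s\le m-1$, since otherwise the conclusion $s\ge m$ is already at hand. The decisive local statement I would isolate is a \emph{one-step lemma}: if $u_i$ and $u_{i+1}$ are adjacent on the path (so that, as vertices of $\CC(S)$, the geodesics $u_i$ and $u_{i+1}$ are disjoint on $S$) and $\Omega_i$ is located above level $k$, then $\Omega_{i+1}$ is located above level $k+1$. Informally, replacing a curve by a disjoint one can lower the highest level reached toward the repelling fixed point $B$ by at most one. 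This is exactly the coarse-partition analogue of the inductive step carried out in the proof of Lemma~\ref{D}, where in the finer partition the level advanced by two at each step.

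Granting the one-step lemma, the argument closes quickly. Suppose $\Omega_j$ is located above level $j$ for some $1\le j\le s$. Applying the one-step lemma to the successive edges $(u_j,u_{j+1}),(u_{j+1},u_{j+2}),\dots,(u_{s-1},u_s),(u_s,u_m)$ and observing that each edge raises both the index and the guaranteed level by one, the property ``$\Omega_i$ is located above level $i$'' propagates from $i=j$ up to $i=s$, and the final edge to $u_m$ yields that $\Omega_m$ is located above level $s+1$; that is, $\Omega_m\cap\Delta_{s+1}'\neq\emptyset$. On the other hand, $\Omega_m=g^m(\Omega_0)$ and $\Omega_0\subset\mathbf{H}\setminus\overline{\Delta_0\cup\Delta_0^*}$, so $\Omega_0\cap\Delta_0^*=\emptyset$; applying $g^m$ gives $\Omega_m\cap g^m(\Delta_0^*)=\Omega_m\cap\Delta_m'=\emptyset$, so $\Omega_m$ is \emph{not} located above level $m$. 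Because the half-planes $\Delta_k'=g^k(\Delta_0^*)$ all contain $B$ and increase with $k$ (the map $g$ pushes their boundaries toward the attracting point $A$), we have $\Delta_{s+1}'\subseteq\Delta_m'$ under the standing assumption $s+1\le m$, whence $\Omega_m\cap\Delta_{s+1}'=\emptyset$. This contradicts $\Omega_m\cap\Delta_{s+1}'\neq\emptyset$, so in fact $s+1>m$, i.e.\ $s\ge m$.

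The heart of the matter, and the step I expect to be the main obstacle, is the one-step lemma; I would prove it by the method of Lemma~\ref{D}, combining Lemma~3.1 of \cite{CZ10} with the intersection criterion of Lemma~4 of \cite{CZ1}. Assume $\Omega_i\cap\Delta_k'\neq\emptyset$ but, for contradiction, $\Omega_{i+1}\cap\Delta_{k+1}'=\emptyset$. Then the half-plane $\Delta_{k+1}'$, lying in the complement of $\Omega_{i+1}$, is contained in a single maximal element $\Delta'\in\mathscr{U}_{i+1}$, so $\Delta'$ covers $B$ and its boundary lies on the $A$-side of the level geodesic $\overline{P_{k+1}Q_{k+1}}=g^{k+1}(\partial\Delta_0^*)$. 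One then seeks a maximal element $\Delta\in\mathscr{U}_i$ with $\Delta\cup\Delta'=\mathbf{H}$; by Lemma~4 of \cite{CZ1} this would force $u_i$ and $u_{i+1}$ to intersect, contradicting adjacency. The existence of such a $\Delta$ is exactly where the hypothesis $\Omega_i\cap\Delta_k'\neq\emptyset$ must enter: since $\Omega_i$ protrudes past level $k$ toward $B$ while $\Delta_k'\subset\Delta_{k+1}'$, the complementary half-plane of $\Omega_i$ on the side of $A$ should be forced to reach past $\partial\Delta'$. As in Lemma~\ref{D}, the delicate point is the dichotomy according to whether $\Omega_i$ is located \emph{at} level $k$ or strictly above it: in the borderline case one must control the relative position on $\mathbf{S}^1$ of the endpoints $P_k,P_{k+1},Q_k,Q_{k+1}$, using the action of $g$ on the boundary arcs recorded in \eqref{K3}, to certify that the two half-planes genuinely cover $\mathbf{H}$ rather than merely share a boundary. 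Establishing this covering relation rigorously---not the surrounding bookkeeping---is where the real work lies.
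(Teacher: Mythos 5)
Your proposal is correct and follows essentially the same route as the paper: your one-step lemma is exactly the paper's inductive step (its Cases 1 and 2, Figures 3 and 4, established via Lemma 3.1 of \cite{CZ10} together with Lemma 4 of \cite{CZ1} and the boundary-crossing/projection argument), and the propagation along the path is the paper's induction. The only difference is cosmetic, at the endpoint: the paper contradicts adjacency of the last vertex with $u_m$ directly (deducing $d_{\mathcal{C}}(u_{m-1},u_m)\geq 2$), while you contradict the one-step conclusion with the explicit facts $\Omega_m=g^m(\Omega_0)$, $\Omega_m\cap\Delta_m'=\emptyset$, and $\Delta_{s+1}'\subseteq\Delta_m'$ --- which unwinds to the same contradiction.
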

\begin{proof}
If  $\Omega_j$ is located above level $j$ for some $j\leq m-2$, then by Lemma 3.1 of \cite{CZ10}, there exists a maximal element $\Delta_j\in \mathscr{U}_j$ such that $\Delta_j$ covers attracting fixed point $A$ of $g$ and 
either $\partial \Delta_j$ lies above $\overline{P_{j+1}Q_{j+1}}$ or $\partial \Delta_j$ crosses $\overline{P_{j+1}Q_{j+1}}$ (Figure 3 and Figure 4).

\bigskip

\unitlength 1mm 
\linethickness{0.4pt}
\ifx\plotpoint\undefined\newsavebox{\plotpoint}\fi 
\begin{picture}(114.637,68.75)(0,0)
\put(50.887,40.75){\line(0,1){1.0864}}
\put(50.864,41.836){\line(0,1){1.0845}}
\put(50.794,42.921){\line(0,1){1.0805}}
\multiput(50.678,44.001)(-.032442,.214905){5}{\line(0,1){.214905}}
\multiput(50.516,45.076)(-.029721,.152371){7}{\line(0,1){.152371}}
\multiput(50.308,46.143)(-.031688,.13209){8}{\line(0,1){.13209}}
\multiput(50.054,47.199)(-.033166,.1161){9}{\line(0,1){.1161}}
\multiput(49.756,48.244)(-.031176,.093743){11}{\line(0,1){.093743}}
\multiput(49.413,49.275)(-.032229,.084629){12}{\line(0,1){.084629}}
\multiput(49.026,50.291)(-.033066,.0767743){13}{\line(0,1){.0767743}}
\multiput(48.596,51.289)(-.0337269,.0699111){14}{\line(0,1){.0699111}}
\multiput(48.124,52.268)(-.0321019,.0598532){16}{\line(0,1){.0598532}}
\multiput(47.61,53.225)(-.0325967,.0549878){17}{\line(0,1){.0549878}}
\multiput(47.056,54.16)(-.03298,.0505679){18}{\line(0,1){.0505679}}
\multiput(46.463,55.07)(-.0332658,.0465254){19}{\line(0,1){.0465254}}
\multiput(45.83,55.954)(-.0334651,.0428062){20}{\line(0,1){.0428062}}
\multiput(45.161,56.81)(-.033587,.0393665){21}{\line(0,1){.0393665}}
\multiput(44.456,57.637)(-.0336391,.0361707){22}{\line(0,1){.0361707}}
\multiput(43.716,58.433)(-.0336277,.0331893){23}{\line(-1,0){.0336277}}
\multiput(42.942,59.196)(-.0366089,.0331616){22}{\line(-1,0){.0366089}}
\multiput(42.137,59.926)(-.0398038,.0330676){21}{\line(-1,0){.0398038}}
\multiput(41.301,60.62)(-.0432416,.0329006){20}{\line(-1,0){.0432416}}
\multiput(40.436,61.278)(-.0469579,.0326525){19}{\line(-1,0){.0469579}}
\multiput(39.544,61.899)(-.0509962,.0323137){18}{\line(-1,0){.0509962}}
\multiput(38.626,62.48)(-.0554108,.0318724){17}{\line(-1,0){.0554108}}
\multiput(37.684,63.022)(-.0642872,.0334015){15}{\line(-1,0){.0642872}}
\multiput(36.72,63.523)(-.0703476,.0328068){14}{\line(-1,0){.0703476}}
\multiput(35.735,63.982)(-.0772015,.0320559){13}{\line(-1,0){.0772015}}
\multiput(34.731,64.399)(-.085045,.031116){12}{\line(-1,0){.085045}}
\multiput(33.711,64.773)(-.103558,.032938){10}{\line(-1,0){.103558}}
\multiput(32.675,65.102)(-.116525,.03164){9}{\line(-1,0){.116525}}
\multiput(31.627,65.387)(-.132494,.029952){8}{\line(-1,0){.132494}}
\multiput(30.567,65.626)(-.178206,.032339){6}{\line(-1,0){.178206}}
\multiput(29.497,65.82)(-.215312,.02962){5}{\line(-1,0){.215312}}
\put(28.421,65.968){\line(-1,0){1.0819}}
\put(27.339,66.07){\line(-1,0){1.0853}}
\put(26.254,66.126){\line(-1,0){1.0867}}
\put(25.167,66.135){\line(-1,0){1.0861}}
\put(24.081,66.097){\line(-1,0){1.0835}}
\multiput(22.997,66.013)(-.26972,-.03256){4}{\line(-1,0){.26972}}
\multiput(21.919,65.883)(-.178717,-.029383){6}{\line(-1,0){.178717}}
\multiput(20.846,65.707)(-.151968,-.031718){7}{\line(-1,0){.151968}}
\multiput(19.782,65.485)(-.131662,-.033418){8}{\line(-1,0){.131662}}
\multiput(18.729,65.217)(-.104089,-.031218){10}{\line(-1,0){.104089}}
\multiput(17.688,64.905)(-.093325,-.032403){11}{\line(-1,0){.093325}}
\multiput(16.662,64.549)(-.084199,-.033337){12}{\line(-1,0){.084199}}
\multiput(15.651,64.149)(-.0708814,-.0316369){14}{\line(-1,0){.0708814}}
\multiput(14.659,63.706)(-.0648317,-.0323319){15}{\line(-1,0){.0648317}}
\multiput(13.686,63.221)(-.0594269,-.0328845){16}{\line(-1,0){.0594269}}
\multiput(12.736,62.695)(-.0545554,-.0333153){17}{\line(-1,0){.0545554}}
\multiput(11.808,62.128)(-.0501308,-.0336407){18}{\line(-1,0){.0501308}}
\multiput(10.906,61.523)(-.0437807,-.0321797){20}{\line(-1,0){.0437807}}
\multiput(10.03,60.879)(-.0403461,-.0324037){21}{\line(-1,0){.0403461}}
\multiput(9.183,60.199)(-.0371532,-.0325506){22}{\line(-1,0){.0371532}}
\multiput(8.366,59.483)(-.0341729,-.0326277){23}{\line(-1,0){.0341729}}
\multiput(7.58,58.732)(-.0327452,-.0340602){23}{\line(0,-1){.0340602}}
\multiput(6.826,57.949)(-.0326784,-.0370408){22}{\line(0,-1){.0370408}}
\multiput(6.108,57.134)(-.0325425,-.0402342){21}{\line(0,-1){.0402342}}
\multiput(5.424,56.289)(-.0323304,-.0436695){20}{\line(0,-1){.0436695}}
\multiput(4.778,55.416)(-.0320336,-.0473822){19}{\line(0,-1){.0473822}}
\multiput(4.169,54.515)(-.0335031,-.0544403){17}{\line(0,-1){.0544403}}
\multiput(3.599,53.59)(-.0330891,-.0593132){16}{\line(0,-1){.0593132}}
\multiput(3.07,52.641)(-.0325551,-.0647199){15}{\line(0,-1){.0647199}}
\multiput(2.582,51.67)(-.031881,-.070772){14}{\line(0,-1){.070772}}
\multiput(2.135,50.679)(-.033627,-.084083){12}{\line(0,-1){.084083}}
\multiput(1.732,49.67)(-.032725,-.093213){11}{\line(0,-1){.093213}}
\multiput(1.372,48.645)(-.031576,-.103981){10}{\line(0,-1){.103981}}
\multiput(1.056,47.605)(-.030108,-.11693){9}{\line(0,-1){.11693}}
\multiput(.785,46.553)(-.032241,-.151858){7}{\line(0,-1){.151858}}
\multiput(.559,45.49)(-.029998,-.178615){6}{\line(0,-1){.178615}}
\multiput(.379,44.418)(-.03349,-.2696){4}{\line(0,-1){.2696}}
\put(.245,43.34){\line(0,-1){1.0832}}
\put(.158,42.256){\line(0,-1){2.1726}}
\put(.122,40.084){\line(0,-1){1.0855}}
\put(.173,38.998){\line(0,-1){1.0823}}
\multiput(.272,37.916)(.028878,-.215413){5}{\line(0,-1){.215413}}
\multiput(.416,36.839)(.031725,-.178316){6}{\line(0,-1){.178316}}
\multiput(.606,35.769)(.033709,-.151539){7}{\line(0,-1){.151539}}
\multiput(.842,34.708)(.031238,-.116633){9}{\line(0,-1){.116633}}
\multiput(1.124,33.659)(.032581,-.103671){10}{\line(0,-1){.103671}}
\multiput(1.449,32.622)(.033625,-.092892){11}{\line(0,-1){.092892}}
\multiput(1.819,31.6)(.0317896,-.0773115){13}{\line(0,-1){.0773115}}
\multiput(2.232,30.595)(.0325642,-.0704602){14}{\line(0,-1){.0704602}}
\multiput(2.688,29.609)(.0331797,-.0644019){15}{\line(0,-1){.0644019}}
\multiput(3.186,28.643)(.0336613,-.0589903){16}{\line(0,-1){.0589903}}
\multiput(3.725,27.699)(.0321378,-.0511073){18}{\line(0,-1){.0511073}}
\multiput(4.303,26.779)(.0324905,-.0470701){19}{\line(0,-1){.0470701}}
\multiput(4.92,25.885)(.0327514,-.0433547){20}{\line(0,-1){.0433547}}
\multiput(5.575,25.017)(.0329302,-.0399175){21}{\line(0,-1){.0399175}}
\multiput(6.267,24.179)(.0330352,-.036723){22}{\line(0,-1){.036723}}
\multiput(6.994,23.371)(.0330732,-.0337418){23}{\line(0,-1){.0337418}}
\multiput(7.754,22.595)(.0344869,-.0322955){23}{\line(1,0){.0344869}}
\multiput(8.548,21.852)(.0392505,-.0337225){21}{\line(1,0){.0392505}}
\multiput(9.372,21.144)(.0426906,-.0336124){20}{\line(1,0){.0426906}}
\multiput(10.226,20.472)(.0464105,-.0334259){19}{\line(1,0){.0464105}}
\multiput(11.108,19.837)(.0504539,-.0331541){18}{\line(1,0){.0504539}}
\multiput(12.016,19.24)(.0548752,-.032786){17}{\line(1,0){.0548752}}
\multiput(12.949,18.683)(.0597422,-.032308){16}{\line(1,0){.0597422}}
\multiput(13.904,18.166)(.0651415,-.0317031){15}{\line(1,0){.0651415}}
\multiput(14.882,17.69)(.0766599,-.0333304){13}{\line(1,0){.0766599}}
\multiput(15.878,17.257)(.084517,-.032521){12}{\line(1,0){.084517}}
\multiput(16.892,16.867)(.093635,-.031499){11}{\line(1,0){.093635}}
\multiput(17.922,16.52)(.115985,-.033566){9}{\line(1,0){.115985}}
\multiput(18.966,16.218)(.13198,-.032143){8}{\line(1,0){.13198}}
\multiput(20.022,15.961)(.152268,-.030246){7}{\line(1,0){.152268}}
\multiput(21.088,15.749)(.214792,-.033183){5}{\line(1,0){.214792}}
\put(22.162,15.583){\line(1,0){1.0801}}
\put(23.242,15.464){\line(1,0){1.0842}}
\put(24.326,15.39){\line(1,0){2.1729}}
\put(26.499,15.383){\line(1,0){1.0847}}
\put(27.584,15.449){\line(1,0){1.0809}}
\multiput(28.665,15.561)(.215015,.031702){5}{\line(1,0){.215015}}
\multiput(29.74,15.72)(.152473,.029196){7}{\line(1,0){.152473}}
\multiput(30.807,15.924)(.132198,.031232){8}{\line(1,0){.132198}}
\multiput(31.865,16.174)(.116214,.032766){9}{\line(1,0){.116214}}
\multiput(32.91,16.469)(.093849,.030853){11}{\line(1,0){.093849}}
\multiput(33.943,16.808)(.084739,.031937){12}{\line(1,0){.084739}}
\multiput(34.96,17.191)(.0768878,.0328013){13}{\line(1,0){.0768878}}
\multiput(35.959,17.618)(.0700269,.0334858){14}{\line(1,0){.0700269}}
\multiput(36.94,18.086)(.0599635,.0318955){16}{\line(1,0){.0599635}}
\multiput(37.899,18.597)(.0550998,.032407){17}{\line(1,0){.0550998}}
\multiput(38.836,19.148)(.0506812,.0328056){18}{\line(1,0){.0506812}}
\multiput(39.748,19.738)(.0466398,.0331053){19}{\line(1,0){.0466398}}
\multiput(40.634,20.367)(.0429213,.0333174){20}{\line(1,0){.0429213}}
\multiput(41.493,21.034)(.039482,.0334511){21}{\line(1,0){.039482}}
\multiput(42.322,21.736)(.0362864,.0335142){22}{\line(1,0){.0362864}}
\multiput(43.12,22.473)(.033305,.0335131){23}{\line(0,1){.0335131}}
\multiput(43.886,23.244)(.0332876,.0364944){22}{\line(0,1){.0364944}}
\multiput(44.618,24.047)(.0332046,.0396896){21}{\line(0,1){.0396896}}
\multiput(45.316,24.881)(.0330494,.043128){20}{\line(0,1){.043128}}
\multiput(45.977,25.743)(.0328141,.0468451){19}{\line(0,1){.0468451}}
\multiput(46.6,26.633)(.0324893,.0508846){18}{\line(0,1){.0508846}}
\multiput(47.185,27.549)(.0320632,.0553006){17}{\line(0,1){.0553006}}
\multiput(47.73,28.489)(.0336228,.0641717){15}{\line(0,1){.0641717}}
\multiput(48.234,29.452)(.033049,.0702341){14}{\line(0,1){.0702341}}
\multiput(48.697,30.435)(.0323217,.0770906){13}{\line(0,1){.0770906}}
\multiput(49.117,31.437)(.031409,.084937){12}{\line(0,1){.084937}}
\multiput(49.494,32.456)(.033294,.103444){10}{\line(0,1){.103444}}
\multiput(49.827,33.491)(.032041,.116415){9}{\line(0,1){.116415}}
\multiput(50.115,34.539)(.030408,.13239){8}{\line(0,1){.13239}}
\multiput(50.359,35.598)(.032953,.178093){6}{\line(0,1){.178093}}
\multiput(50.556,36.666)(.030362,.215208){5}{\line(0,1){.215208}}
\put(50.708,37.742){\line(0,1){1.0816}}
\put(50.814,38.824){\line(0,1){1.9261}}
\put(114.637,40.75){\line(0,1){1.0864}}
\put(114.614,41.836){\line(0,1){1.0845}}
\put(114.544,42.921){\line(0,1){1.0805}}
\multiput(114.428,44.001)(-.032442,.214905){5}{\line(0,1){.214905}}
\multiput(114.266,45.076)(-.029721,.152371){7}{\line(0,1){.152371}}
\multiput(114.058,46.143)(-.031688,.13209){8}{\line(0,1){.13209}}
\multiput(113.804,47.199)(-.033166,.1161){9}{\line(0,1){.1161}}
\multiput(113.506,48.244)(-.031176,.093743){11}{\line(0,1){.093743}}
\multiput(113.163,49.275)(-.032229,.084629){12}{\line(0,1){.084629}}
\multiput(112.776,50.291)(-.033066,.0767743){13}{\line(0,1){.0767743}}
\multiput(112.346,51.289)(-.0337269,.0699111){14}{\line(0,1){.0699111}}
\multiput(111.874,52.268)(-.0321019,.0598532){16}{\line(0,1){.0598532}}
\multiput(111.36,53.225)(-.0325967,.0549878){17}{\line(0,1){.0549878}}
\multiput(110.806,54.16)(-.03298,.0505679){18}{\line(0,1){.0505679}}
\multiput(110.213,55.07)(-.0332658,.0465254){19}{\line(0,1){.0465254}}
\multiput(109.58,55.954)(-.0334651,.0428062){20}{\line(0,1){.0428062}}
\multiput(108.911,56.81)(-.033587,.0393665){21}{\line(0,1){.0393665}}
\multiput(108.206,57.637)(-.0336391,.0361707){22}{\line(0,1){.0361707}}
\multiput(107.466,58.433)(-.0336277,.0331893){23}{\line(-1,0){.0336277}}
\multiput(106.692,59.196)(-.0366089,.0331616){22}{\line(-1,0){.0366089}}
\multiput(105.887,59.926)(-.0398038,.0330676){21}{\line(-1,0){.0398038}}
\multiput(105.051,60.62)(-.0432416,.0329006){20}{\line(-1,0){.0432416}}
\multiput(104.186,61.278)(-.0469579,.0326525){19}{\line(-1,0){.0469579}}
\multiput(103.294,61.899)(-.0509962,.0323137){18}{\line(-1,0){.0509962}}
\multiput(102.376,62.48)(-.0554108,.0318724){17}{\line(-1,0){.0554108}}
\multiput(101.434,63.022)(-.0642872,.0334015){15}{\line(-1,0){.0642872}}
\multiput(100.47,63.523)(-.0703476,.0328068){14}{\line(-1,0){.0703476}}
\multiput(99.485,63.982)(-.0772015,.0320559){13}{\line(-1,0){.0772015}}
\multiput(98.481,64.399)(-.085045,.031116){12}{\line(-1,0){.085045}}
\multiput(97.461,64.773)(-.103558,.032938){10}{\line(-1,0){.103558}}
\multiput(96.425,65.102)(-.116525,.03164){9}{\line(-1,0){.116525}}
\multiput(95.377,65.387)(-.132494,.029952){8}{\line(-1,0){.132494}}
\multiput(94.317,65.626)(-.178206,.032339){6}{\line(-1,0){.178206}}
\multiput(93.247,65.82)(-.215312,.02962){5}{\line(-1,0){.215312}}
\put(92.171,65.968){\line(-1,0){1.0819}}
\put(91.089,66.07){\line(-1,0){1.0853}}
\put(90.004,66.126){\line(-1,0){1.0867}}
\put(88.917,66.135){\line(-1,0){1.0861}}
\put(87.831,66.097){\line(-1,0){1.0835}}
\multiput(86.747,66.013)(-.26972,-.03256){4}{\line(-1,0){.26972}}
\multiput(85.669,65.883)(-.178717,-.029383){6}{\line(-1,0){.178717}}
\multiput(84.596,65.707)(-.151968,-.031718){7}{\line(-1,0){.151968}}
\multiput(83.532,65.485)(-.131662,-.033418){8}{\line(-1,0){.131662}}
\multiput(82.479,65.217)(-.104089,-.031218){10}{\line(-1,0){.104089}}
\multiput(81.438,64.905)(-.093325,-.032403){11}{\line(-1,0){.093325}}
\multiput(80.412,64.549)(-.084199,-.033337){12}{\line(-1,0){.084199}}
\multiput(79.401,64.149)(-.0708814,-.0316369){14}{\line(-1,0){.0708814}}
\multiput(78.409,63.706)(-.0648317,-.0323319){15}{\line(-1,0){.0648317}}
\multiput(77.436,63.221)(-.0594269,-.0328845){16}{\line(-1,0){.0594269}}
\multiput(76.486,62.695)(-.0545554,-.0333153){17}{\line(-1,0){.0545554}}
\multiput(75.558,62.128)(-.0501308,-.0336407){18}{\line(-1,0){.0501308}}
\multiput(74.656,61.523)(-.0437807,-.0321797){20}{\line(-1,0){.0437807}}
\multiput(73.78,60.879)(-.0403461,-.0324037){21}{\line(-1,0){.0403461}}
\multiput(72.933,60.199)(-.0371532,-.0325506){22}{\line(-1,0){.0371532}}
\multiput(72.116,59.483)(-.0341729,-.0326277){23}{\line(-1,0){.0341729}}
\multiput(71.33,58.732)(-.0327452,-.0340602){23}{\line(0,-1){.0340602}}
\multiput(70.576,57.949)(-.0326784,-.0370408){22}{\line(0,-1){.0370408}}
\multiput(69.858,57.134)(-.0325425,-.0402342){21}{\line(0,-1){.0402342}}
\multiput(69.174,56.289)(-.0323304,-.0436695){20}{\line(0,-1){.0436695}}
\multiput(68.528,55.416)(-.0320336,-.0473822){19}{\line(0,-1){.0473822}}
\multiput(67.919,54.515)(-.0335031,-.0544403){17}{\line(0,-1){.0544403}}
\multiput(67.349,53.59)(-.0330891,-.0593132){16}{\line(0,-1){.0593132}}
\multiput(66.82,52.641)(-.0325551,-.0647199){15}{\line(0,-1){.0647199}}
\multiput(66.332,51.67)(-.031881,-.070772){14}{\line(0,-1){.070772}}
\multiput(65.885,50.679)(-.033627,-.084083){12}{\line(0,-1){.084083}}
\multiput(65.482,49.67)(-.032725,-.093213){11}{\line(0,-1){.093213}}
\multiput(65.122,48.645)(-.031576,-.103981){10}{\line(0,-1){.103981}}
\multiput(64.806,47.605)(-.030108,-.11693){9}{\line(0,-1){.11693}}
\multiput(64.535,46.553)(-.032241,-.151858){7}{\line(0,-1){.151858}}
\multiput(64.309,45.49)(-.029998,-.178615){6}{\line(0,-1){.178615}}
\multiput(64.129,44.418)(-.03349,-.2696){4}{\line(0,-1){.2696}}
\put(63.995,43.34){\line(0,-1){1.0832}}
\put(63.908,42.256){\line(0,-1){2.1726}}
\put(63.872,40.084){\line(0,-1){1.0855}}
\put(63.923,38.998){\line(0,-1){1.0823}}
\multiput(64.022,37.916)(.028878,-.215413){5}{\line(0,-1){.215413}}
\multiput(64.166,36.839)(.031725,-.178316){6}{\line(0,-1){.178316}}
\multiput(64.356,35.769)(.033709,-.151539){7}{\line(0,-1){.151539}}
\multiput(64.592,34.708)(.031238,-.116633){9}{\line(0,-1){.116633}}
\multiput(64.874,33.659)(.032581,-.103671){10}{\line(0,-1){.103671}}
\multiput(65.199,32.622)(.033625,-.092892){11}{\line(0,-1){.092892}}
\multiput(65.569,31.6)(.0317896,-.0773115){13}{\line(0,-1){.0773115}}
\multiput(65.982,30.595)(.0325642,-.0704602){14}{\line(0,-1){.0704602}}
\multiput(66.438,29.609)(.0331797,-.0644019){15}{\line(0,-1){.0644019}}
\multiput(66.936,28.643)(.0336613,-.0589903){16}{\line(0,-1){.0589903}}
\multiput(67.475,27.699)(.0321378,-.0511073){18}{\line(0,-1){.0511073}}
\multiput(68.053,26.779)(.0324905,-.0470701){19}{\line(0,-1){.0470701}}
\multiput(68.67,25.885)(.0327514,-.0433547){20}{\line(0,-1){.0433547}}
\multiput(69.325,25.017)(.0329302,-.0399175){21}{\line(0,-1){.0399175}}
\multiput(70.017,24.179)(.0330352,-.036723){22}{\line(0,-1){.036723}}
\multiput(70.744,23.371)(.0330732,-.0337418){23}{\line(0,-1){.0337418}}
\multiput(71.504,22.595)(.0344869,-.0322955){23}{\line(1,0){.0344869}}
\multiput(72.298,21.852)(.0392505,-.0337225){21}{\line(1,0){.0392505}}
\multiput(73.122,21.144)(.0426906,-.0336124){20}{\line(1,0){.0426906}}
\multiput(73.976,20.472)(.0464105,-.0334259){19}{\line(1,0){.0464105}}
\multiput(74.858,19.837)(.0504539,-.0331541){18}{\line(1,0){.0504539}}
\multiput(75.766,19.24)(.0548752,-.032786){17}{\line(1,0){.0548752}}
\multiput(76.699,18.683)(.0597422,-.032308){16}{\line(1,0){.0597422}}
\multiput(77.654,18.166)(.0651415,-.0317031){15}{\line(1,0){.0651415}}
\multiput(78.632,17.69)(.0766599,-.0333304){13}{\line(1,0){.0766599}}
\multiput(79.628,17.257)(.084517,-.032521){12}{\line(1,0){.084517}}
\multiput(80.642,16.867)(.093635,-.031499){11}{\line(1,0){.093635}}
\multiput(81.672,16.52)(.115985,-.033566){9}{\line(1,0){.115985}}
\multiput(82.716,16.218)(.13198,-.032143){8}{\line(1,0){.13198}}
\multiput(83.772,15.961)(.152268,-.030246){7}{\line(1,0){.152268}}
\multiput(84.838,15.749)(.214792,-.033183){5}{\line(1,0){.214792}}
\put(85.912,15.583){\line(1,0){1.0801}}
\put(86.992,15.464){\line(1,0){1.0842}}
\put(88.076,15.39){\line(1,0){2.1729}}
\put(90.249,15.383){\line(1,0){1.0847}}
\put(91.334,15.449){\line(1,0){1.0809}}
\multiput(92.415,15.561)(.215015,.031702){5}{\line(1,0){.215015}}
\multiput(93.49,15.72)(.152473,.029196){7}{\line(1,0){.152473}}
\multiput(94.557,15.924)(.132198,.031232){8}{\line(1,0){.132198}}
\multiput(95.615,16.174)(.116214,.032766){9}{\line(1,0){.116214}}
\multiput(96.66,16.469)(.093849,.030853){11}{\line(1,0){.093849}}
\multiput(97.693,16.808)(.084739,.031937){12}{\line(1,0){.084739}}
\multiput(98.71,17.191)(.0768878,.0328013){13}{\line(1,0){.0768878}}
\multiput(99.709,17.618)(.0700269,.0334858){14}{\line(1,0){.0700269}}
\multiput(100.69,18.086)(.0599635,.0318955){16}{\line(1,0){.0599635}}
\multiput(101.649,18.597)(.0550998,.032407){17}{\line(1,0){.0550998}}
\multiput(102.586,19.148)(.0506812,.0328056){18}{\line(1,0){.0506812}}
\multiput(103.498,19.738)(.0466398,.0331053){19}{\line(1,0){.0466398}}
\multiput(104.384,20.367)(.0429213,.0333174){20}{\line(1,0){.0429213}}
\multiput(105.243,21.034)(.039482,.0334511){21}{\line(1,0){.039482}}
\multiput(106.072,21.736)(.0362864,.0335142){22}{\line(1,0){.0362864}}
\multiput(106.87,22.473)(.033305,.0335131){23}{\line(0,1){.0335131}}
\multiput(107.636,23.244)(.0332876,.0364944){22}{\line(0,1){.0364944}}
\multiput(108.368,24.047)(.0332046,.0396896){21}{\line(0,1){.0396896}}
\multiput(109.066,24.881)(.0330494,.043128){20}{\line(0,1){.043128}}
\multiput(109.727,25.743)(.0328141,.0468451){19}{\line(0,1){.0468451}}
\multiput(110.35,26.633)(.0324893,.0508846){18}{\line(0,1){.0508846}}
\multiput(110.935,27.549)(.0320632,.0553006){17}{\line(0,1){.0553006}}
\multiput(111.48,28.489)(.0336228,.0641717){15}{\line(0,1){.0641717}}
\multiput(111.984,29.452)(.033049,.0702341){14}{\line(0,1){.0702341}}
\multiput(112.447,30.435)(.0323217,.0770906){13}{\line(0,1){.0770906}}
\multiput(112.867,31.437)(.031409,.084937){12}{\line(0,1){.084937}}
\multiput(113.244,32.456)(.033294,.103444){10}{\line(0,1){.103444}}
\multiput(113.577,33.491)(.032041,.116415){9}{\line(0,1){.116415}}
\multiput(113.865,34.539)(.030408,.13239){8}{\line(0,1){.13239}}
\multiput(114.109,35.598)(.032953,.178093){6}{\line(0,1){.178093}}
\multiput(114.306,36.666)(.030362,.215208){5}{\line(0,1){.215208}}
\put(114.458,37.742){\line(0,1){1.0816}}
\put(114.564,38.824){\line(0,1){1.9261}}
\qbezier(5.25,25.5)(25.75,40.125)(44.25,24.25)
\qbezier(69,25.5)(89.5,40.125)(108,24.25)
\put(2.5,23.75){\makebox(0,0)[cc]{$P_{j+1}$}}
\put(66.25,23.75){\makebox(0,0)[cc]{$P_{j+1}$}}
\put(48.25,21.25){\makebox(0,0)[cc]{$Q_{j+1}$}}
\qbezier(1.5,49.5)(25.625,39)(49.25,48.5)
\put(47.25,44.25){\makebox(0,0)[cc]{$\Delta_j$}}
\put(-.75,51){\makebox(0,0)[cc]{$Y_j$}}
\put(52,49.5){\makebox(0,0)[cc]{$X_j$}}
\put(25.75,66){\vector(0,-1){50.5}}
\put(89.5,66){\vector(0,-1){50.5}}
\put(28,56.25){\makebox(0,0)[cc]{$g$}}
\put(91.75,56.25){\makebox(0,0)[cc]{$g$}}
\put(25.5,12.5){\makebox(0,0)[cc]{$A$}}
\put(89.25,12.5){\makebox(0,0)[cc]{$A$}}
\put(25.75,68.75){\makebox(0,0)[cc]{$B$}}
\put(89.5,68.75){\makebox(0,0)[cc]{$B$}}
\qbezier(77.5,18.75)(89.375,40.625)(110.75,54)
\put(113.75,55){\makebox(0,0)[cc]{$X_j$}}
\put(75.75,15.75){\makebox(0,0)[cc]{$Y_j$}}
\put(111.75,20.5){\makebox(0,0)[cc]{$Q_{j+1}$}}
\put(109,48.75){\makebox(0,0)[cc]{$\Delta_j$}}
\put(25.25,3.75){\makebox(0,0)[cc]{Fig. 3}}
\put(89.5,4){\makebox(0,0)[cc]{Fig. 4}}
\put(8,31.25){\makebox(0,0)[cc]{$\Delta_{j+1}'$}}
\put(70.75,31.25){\makebox(0,0)[cc]{$\Delta_{j+1}'$}}
\end{picture}

There are two cases to consider. 

Case 1. $\Omega_{j+1}$ is located at level $j+1$. There is a maximal element $\Delta_{j+1}''\in \mathscr{U}_{j+1}$ such that $\Delta_{j+1}''=\Delta_{j+1}'$. If Figure 3 occurs, then $\Delta_{j+1}''\cap 
\Delta_{j}\neq \emptyset$, $\partial \Delta_{j+1}''\cap \partial \Delta_j=\emptyset$ and $\Delta_{j+1}''\cup 
\Delta_{j}=\mathbf{H}$. From Lemma 4 of \cite{CZ1}, we deduce that $d_{\mathcal{C}}(u_{j+1}, u_j)\geq 2$. This is a contradiction. If Figure 4 occurs, then $\partial \Delta_j$ intersects $\partial \Delta_{j+1}''$, which implies that $\tilde{u}_{j+1}$ intersects $\tilde{u}_{j}$. Thus $u_{j+1}$ intersects $u_j$. This again contradicts that $d_{\mathcal{C}}(u_{j+1}, u_j)=1$. 

Case 2. $\Omega_{j+1}$ is located below  level $j+1$. This means that there is a maximal element $\Delta_{j+1}''\in \mathscr{U}_{j+1}$ that contains $\Delta_{j+1}'$. If Figure 3 occurs, then by the same argument as in Case 1, we deduce that $d_{\mathcal{C}}(u_{j+1}, u_j)\geq 2$. If Figure 4 occurs, then either $\partial \Delta_{j+1}''$ crosses $\partial \Delta_j$ or we have $\Delta_{j+1}''\cap 
\Delta_{j}\neq \emptyset$, $\partial \Delta_{j+1}''\cap \partial \Delta_j=\emptyset$ and $\Delta_{j+1}''\cup 
\Delta_{j}=\mathbf{H}$. In both cases, by the same argument as in Case 1, we deduce that  $d_{\mathcal{C}}(u_{j+1}, u_j)\geq 2$. This again contradicts that $d_{\mathcal{C}}(u_{j+1}, u_j)=1$. 

We conclude that all $\Omega_{k}$ with $k>j$ lie above level $k$. In particular, $\Omega_{m-1}$ is located above level $m-1$. So there is a maximal element $\Delta_{m-1}\in \mathscr{U}_{m-1}$ such that either $\partial \Delta_{m-1}$ lies above level $m$ or $\partial \Delta_m'$ crosses $\partial \Delta_{m-1}$. In both cases, by the same argument as in Case 1 and Case 2, we assert that $d_{\mathcal{C}}(u_{m-1}, u_m)\geq 2$. It follows that $s\geq m$. 
\end{proof}

Let $\tilde{u}_0\in \mathcal{C}_0(\tilde{S})$ and $\tilde{c}\in \mathscr{S}\backslash \mathscr{S}(2)$ be such that $i(\tilde{u}_0, \tilde{c})=1$. Let $u_0, g$, and $(\tau_0,\Omega_0,\mathscr{U}_0)$ be as before. Then $g$ possesses the property that $\varrho(\mbox{axis}(g))\cap \Omega_0\neq \emptyset$. As an easy consequence of Lemma \ref{L3.1}, we obtain
\begin{lem}\label{L3.2}
With the above conditions, if $\Omega_j$ is located above level $j$ for some $j$ with $1\leq j\leq s$, then the path $[u_0,\cdots, u_s,u_m]$, where $u_m=f^m(u_0)$ and $f=g^*$, is not a geodesic path. 
\end{lem}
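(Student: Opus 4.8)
The plan is to play the lower bound on $s$ coming from Lemma \ref{L3.1} against an upper bound on $d_{\mathcal{C}}(u_0,u_m)$ that is forced by the hypothesis $i(\tilde{u}_0,\tilde{c})=1$. First I would invoke Lemma \ref{L3.1} directly: since by assumption $\Omega_j$ is located above level $j$ for some $1\le j\le s$, that lemma yields $s\ge m$, so the path $[u_0,u_1,\cdots,u_s,u_m]$ has at least $m+1$ edges. Hence it suffices to produce some path of length at most $m$ joining $u_0$ and $u_m$, i.e. to establish $d_{\mathcal{C}}(u_0,u_m)\le m$.

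The heart of the matter is the single-step estimate $d_{\mathcal{C}}(u_0,f(u_0))=1$. Recall from Section 3 that $u_0=\chi(\Omega_0)$ and that the configuration of $f(u_0)$ is $(g\tau_0g^{-1},g(\Omega_0),g(\mathscr{U}_0))$, so $f(u_0)=\chi(g(\Omega_0))$; since $g\in G$ is a deck transformation it permutes $\{\varrho^{-1}(\tilde{u}_0)\}$ and hence carries $\mathscr{R}_{\tilde{u}_0}$ to itself, so $g(\Omega_0)\in\mathscr{R}_{\tilde{u}_0}$ indeed. I would then show that $\Omega_0$ and $g(\Omega_0)$ are adjacent. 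Orient $\mbox{axis}(g)$ from $B$ to $A$ and pick $z\in\mbox{axis}(g)\cap\Omega_0$; because $\varrho(\mbox{axis}(g))=\tilde{c}$ and $i(\tilde{u}_0,\tilde{c})=1$, the subarc of $\mbox{axis}(g)$ from $z$ to $g(z)$ meets exactly one geodesic of $\{\varrho^{-1}(\tilde{u}_0)\}$, namely the $A$-side boundary geodesic $\partial\Delta_0$ of $\Omega_0$. Consequently $g(\partial\Delta_0^*)=\partial\Delta_0$ (the region $R_0$ from the proof of Lemma \ref{D} now contains no level geodesics), so $\Omega_0$ and $g(\Omega_0)$ share the boundary geodesic $\partial\Delta_0$ and are therefore adjacent. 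By the bijection $\chi$ of Lemma 2.1 of \cite{CZ11}, adjacency is equivalent to $d_{\mathcal{C}}(u_0,f(u_0))=1$.

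With the single-step estimate in hand I would finish using that $f=g^*\in\mbox{Mod}(S)$ acts on $\mathcal{C}(S)$ as an isometry of $d_{\mathcal{C}}$. For each $j$ this gives $d_{\mathcal{C}}(f^j(u_0),f^{j+1}(u_0))=d_{\mathcal{C}}(u_0,f(u_0))=1$, whence by the triangle inequality
\[
d_{\mathcal{C}}(u_0,u_m)=d_{\mathcal{C}}(u_0,f^m(u_0))\le\sum_{j=0}^{m-1}d_{\mathcal{C}}(f^j(u_0),f^{j+1}(u_0))=m.
\]
Comparing with $s\ge m$, the path $[u_0,u_1,\cdots,u_s,u_m]$ has length $s+1\ge m+1>m\ge d_{\mathcal{C}}(u_0,u_m)$, so it strictly exceeds the distance and cannot be a geodesic segment, which is the assertion.

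The step I expect to be the main obstacle is the adjacency claim $g(\partial\Delta_0^*)=\partial\Delta_0$, i.e. that when $i(\tilde{u}_0,\tilde{c})=1$ the oriented axis leaves $\Omega_0$ and enters $g(\Omega_0)$ across a single shared level geodesic. This is the degenerate ($N=1$) analogue of the configuration analyzed in the proof of Lemma \ref{D}, and making it rigorous requires a careful check---using that one period of $g$ along $\mbox{axis}(g)$ projects to one traversal of $\tilde{c}$ and hence meets $\tilde{u}_0$ exactly once---that no intermediate component of $\mathbf{H}\backslash\{\varrho^{-1}(\tilde{u}_0)\}$ is inserted between $\Omega_0$ and $g(\Omega_0)$. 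Everything else is bookkeeping with the isometric action of $f$ and the triangle inequality.
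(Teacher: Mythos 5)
Your proposal is correct and follows essentially the same route as the paper's proof: Lemma \ref{L3.1} forces $s\ge m$, while $i(\tilde{c},\tilde{u}_0)=1$ gives the single-step estimate $d_{\mathcal{C}}(u_0,f(u_0))=1$ and hence, by the isometric action of $f$ and the triangle inequality, $d_{\mathcal{C}}(u_0,f^m(u_0))\le m$, contradicting geodesity of a path with $s+1\ge m+1$ edges. The only difference is that you spell out the justification of the single-step estimate (adjacency of $\Omega_0$ and $g(\Omega_0)$ via the axis crossing exactly one lift of $\tilde{u}_0$ per period, plus Lemma 2.1 of \cite{CZ11}), a step the paper simply asserts.
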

\begin{proof}
By Lemma \ref{L3.1}, we assert that $s\geq m$. But from the assumption, we know that $i(\tilde{c},\tilde{u}_0)=1$, which means that $d_{\mathcal{C}}(u_0, f(u_0))=1$, and so for all $j$ with $0\leq j\leq m-1$, $d_{\mathcal{C}}(f^j(u_0), f^{j+1}(u_0))=1$. It follows from the triangle inequality that $d_{\mathcal{C}}(u_0,f^m(u_0))\leq m$. 
So by the definition, $[u_0,\cdots, u_s,u_m]$ is not a geodesic path. 
\end{proof}


\begin{lem}\label{L3.3}
With the same notations as in Lemma $\ref{L3.1}$, suppose that a path $[u_0,u_1,\cdots, u_s,u_m]$ is a geodesic path. Then $i(\tilde{c},\tilde{u}_0)=1$  if and only if all $\Omega_j$ are located at level $j$.
\end{lem}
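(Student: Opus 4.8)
The plan is to prove the two implications separately: the forward direction ($i(\tilde c,\tilde u_0)=1\Rightarrow$ all $\Omega_j$ located at level $j$) by an induction that recycles the case analysis behind Lemma \ref{L3.1}, and the reverse direction by a short reduction to the relation between $d_{\mathcal C}(u_0,f(u_0))$ and $i(\tilde c,\tilde u_0)$ already recorded in the proof of Lemma \ref{L3.2}. Throughout I will use the rigidity of the definition: if $\Omega_j$ is located at level $j$, then $\overline{P_jQ_j}=\partial\Delta_j'$ is a boundary geodesic of $\Omega_j$, so $\tilde u_j=\varrho(\overline{P_jQ_j})=\tilde u_0$; and then $\Omega_j$ must be the component of $\mathbf H\setminus\varrho^{-1}(\tilde u_0)$ lying on the $A$-side of $\overline{P_jQ_j}$, which is precisely $g^j(\Omega_0)$. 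Applying $\chi$ gives $u_j=f^j(u_0)$.

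For the reverse implication I would argue as follows. Assuming every $\Omega_j$ is located at level $j$, the rigidity observation gives $\Omega_1=g(\Omega_0)$, hence $u_1=f(u_0)$, so the first edge of the geodesic path reads $[u_0,u_1]=[u_0,f(u_0)]$. Since the path is a geodesic segment, $d_{\mathcal C}(u_0,f(u_0))=1$; as noted in the proof of Lemma \ref{L3.2}, $d_{\mathcal C}(u_0,f(u_0))=1$ holds exactly when $i(\tilde c,\tilde u_0)=1$, which is what we want.

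For the forward implication I would prove by induction on $j$ that $\Omega_j=g^j(\Omega_0)$, whence $\Omega_j$ is located at level $j$, for every intermediate index. The base case $j=0$ is the construction of $(\tau_0,\Omega_0,\mathscr U_0)$ together with $\Delta_0^*=\Delta_0'\in\mathscr U_0$. For the inductive step, suppose $\Omega_j=g^j(\Omega_0)$; then $E:=g^j(\Delta_0)$ is a maximal element of $\mathscr U_j=g^j(\mathscr U_0)$ containing the attracting fixed point $A$, with boundary $\partial E=g^j(\partial\Delta_0)$ the lift of $\tilde u_0$ that $\mbox{axis}(g)$ crosses between level $j$ and level $j+1$. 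Because the path is geodesic and $i(\tilde c,\tilde u_0)=1$, Lemma \ref{L3.2} forbids $\Omega_{j+1}$ from being located above level $j+1$, so the maximal element $M^{B}\in\mathscr U_{j+1}$ containing $B$ satisfies $\Delta_{j+1}'\subseteq M^{B}$. If this inclusion were strict, i.e. $\Omega_{j+1}$ were located below level $j+1$, then $\partial M^{B}$ would lie strictly on the $A$-side of $\overline{P_{j+1}Q_{j+1}}$; I would then verify the three linking conditions $M^{B}\cap E\neq\emptyset$, $\partial M^{B}\cap\partial E=\emptyset$, and $M^{B}\cup E=\mathbf H$, and invoke Lemma 4 of \cite{CZ1} to conclude $d_{\mathcal C}(u_j,u_{j+1})\geq 2$, contradicting that consecutive vertices of a geodesic are at distance one. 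Hence $\Delta_{j+1}'=M^{B}$, so $\Omega_{j+1}$ is located at level $j+1$, and by rigidity $\Omega_{j+1}=g^{j+1}(\Omega_0)$, closing the induction.

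The step I expect to be the main obstacle is the verification of these linking conditions in the correct generality, especially confirming that $\partial E=g^j(\partial\Delta_0)$ always lies on the $B$-side of $\overline{P_{j+1}Q_{j+1}}$, so that it is separated from $\partial M^{B}$ by the level-$(j+1)$ geodesic no matter how far toward $A$ the boundary $\partial M^{B}$ recedes. This is exactly where the hypothesis $i(\tilde c,\tilde u_0)=1$ is used: it guarantees that $\mbox{axis}(g)$ meets only one lift of $\tilde u_0$, namely $g^j(\partial\Delta_0)$, strictly between consecutive level geodesics, which pins the position of $\partial E$. Once the linking lemma applies, the contradiction is of the same nature as Cases 1 and 2 in the proof of Lemma \ref{L3.1}, so no fresh distance estimate is needed; the residual bookkeeping (that the induction exhausts all intermediate indices $1\le j\le s$, and that the final edge $[u_s,u_m]$ forces $s=m-1$) is then routine.
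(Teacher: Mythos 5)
Your proposal is correct and runs on the same toolkit as the paper's own proof --- the level machinery, Lemmas \ref{L3.1}/\ref{L3.2}, the linking criterion (Lemma 4 of \cite{CZ1}), adjacency of regions (Lemma 2.1 of \cite{CZ11}), and Lemma \ref{D} --- but it is organized differently, and in one respect more carefully. The paper proves the forward direction by a single global contradiction: from $i(\tilde c,\tilde u_0)=1$ it gets $d_{\mathcal C}(u_0,f^m(u_0))\le m$, and if some $\Omega_{j_0}$ were located \emph{above} level $j_0$, Lemma \ref{L3.1} would force $d_{\mathcal C}(u_0,f^m(u_0))\ge m+1$; the possibility that some $\Omega_j$ sits \emph{below} its level is never addressed there (the paper implicitly relies on the at-or-above dichotomy that it elsewhere attributes to the argument of Theorem 1.2 of \cite{CZ10}). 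Your level-by-level induction supplies exactly that missing case: the strict inclusion $\Delta_{j+1}'\subsetneq M^{B}$ produces the three linking conditions, hence $d_{\mathcal C}(u_j,u_{j+1})\ge 2$ by Lemma 4 of \cite{CZ1}, mirroring Case 2 of Lemma \ref{L3.1}. Your reverse direction (reading $i=1$ off the single edge $[u_0,f(u_0)]$) is also sound and shorter than the paper's, which chains all the adjacencies to get $d_{\mathcal C}(u_0,f^m(u_0))= m$ and then quotes Lemma \ref{D}; note only that the ``exactly when'' you attribute to the proof of Lemma \ref{L3.2} is stated there in one direction only ($i=1\Rightarrow d_{\mathcal C}(u_0,f(u_0))=1$), and the implication you actually need, $d_{\mathcal C}(u_0,f(u_0))=1\Rightarrow i(\tilde c,\tilde u_0)=1$, is Lemma \ref{D} with $m=1$.

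One geometric correction, which does not damage your argument: when $i(\tilde c,\tilde u_0)=1$ the axis of $g$ crosses \emph{no} lift of $\tilde u_0$ strictly between consecutive level geodesics. Indeed, $\partial\Delta_0^*$ and $\partial\Delta_0$ are consecutive lifts crossed by the axis (they bound $\Omega_0$), and $g$ shifts the sequence of crossed lifts by exactly $i(\tilde c,\tilde u_0)=1$, so $\partial\Delta_0=g(\partial\Delta_0^*)$ and $g^j(\partial\Delta_0)$ \emph{coincides} with $\overline{P_{j+1}Q_{j+1}}$ rather than lying strictly on its $B$-side. Consequently $\partial E=\partial\Delta_{j+1}'$ and $E=\mathbf{H}\setminus\overline{\Delta_{j+1}'}$, which makes the verification you flagged as the main obstacle immediate rather than delicate: $\Delta_{j+1}'\subsetneq M^{B}$ at once gives $M^{B}\cap E\neq\emptyset$, $\partial M^{B}\cap\partial E=\emptyset$, and $M^{B}\cup E=\mathbf{H}$.
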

\begin{proof}
By the same argument as in Lemma \ref{L3.2}, we obtain  
\begin{equation}\label{LP}
d_{\mathcal{C}}(u_0,f^m(u_0))\leq m. 
\end{equation}

If there is $\Omega_{j_0}$ that is located above level $j_0$, then by Lemma \ref{L3.1}, all $\Omega_j$ with $j\geq j_0$ are located above level $j$. By the same argument of Lemma \ref{L3.1}, we conclude that $d_{\mathcal{C}}(u_0,f^m(u_0))\geq m+1$. This contradicts (\ref{LP}). 

Conversely, if all $\Omega_j$ are located at level $j$, then for $j=0,\cdots, m-1$, $\Omega_j$ is adjacent to $\Omega_{j+1}$. By Lemma 2.1 of \cite{CZ11}, $d_{\mathcal{C}}(u_j,u_{j+1})=1$. Hence  $d_{\mathcal{C}}(u_0,f^m(u_0))=m$. By virtue of Lemma \ref{D}, we deduce that $i(\tilde{c},\tilde{u}_0)=1$. 
\end{proof}

\section{Proof of results}
\setcounter{equation}{0}

\noindent {\em Proof of Theorem $1.2$}:  Assume that $d_{\mathcal{C}}(u_0,f^m(u_0))=m$. If $i(\tilde{c},\tilde{u}_0)\geq 2$, then by Lemma \ref{D}, we have $d_{\mathcal{C}}(u_0,f^m(u_0))\geq m+1$. This is a contradiction. This shows that $i(\tilde{c},\tilde{u}_0)=1$. 

Conversely, suppose $i(\tilde{c},\tilde{u}_0)=1$. Let $[u_0,u_1,\cdots, u_s,u_m]$ be a geodesic segment joining $u_0$ and $u_m$. 	Then all $u_1,\cdots, u_s$ are non-preperipheral geodesic, which means that $\tilde{u}_1,\cdots, \tilde{u}_s$, are non-trivial, Let $(\tau_j,\Omega_j,\mathscr{U}_j)$ be the configurations corresponding to $u_j$.  By Lemma \ref{L3.3}, all $\Omega_j$ where $j=1,\ldots, s$, are located at level $j$. This implies that $\Omega_j$ is adjacent to $\Omega_{j+1}$ for $j=1,\cdots, s-1$. If $s\leq m-2$, then by the same argument of Lemma \ref{D}, $d_{\mathcal{C}}(u_s,u_m)\geq 2$. This is absurd. So $s\geq m-1$. 

On the other hand, if for all $j=1,2,\cdots, m-2$, $\Omega_j$ is adjacent to $\Omega_{j+1}$, then $\Omega_{m-1}$ is also adjacent to $\Omega_m$, 
which tells us that  $d_{\mathcal{C}}(\chi(\Omega_{m-1}),\chi(\Omega_m))=1$, that is, $d_{\mathcal{C}}(u_{m-1},u_m)=1$. It follows that $s=m-1$. In this case, 
$$
d_{\mathcal{C}}(u_0,f^m(u_0))=\sum_{j=0}^{m-1}d_{\mathcal{C}}(\chi(\Omega_j),\chi(\Omega_{j+1}))=m.
$$
Hence the geodesic segment connecting $u_0$ and $u_m$ is realized by the sequence $\Omega_0, \Omega_1,\cdots, \Omega_m$.  Note that $\chi(\Omega_j)=\chi(g^{j}(\Omega_0))=f^j(u_0)$. We conclude that the geodesic segment connecting $u_0$ and $u_m$ is
$$
[u_0,f(u_0), f^2(u_0),\cdots, f^{m-1}(u_0), f^m(u_0)].
$$

If there is another geodesic segment $[u_0, v_1,\cdots, v_{m-1}, u_m]$ connecting $u_0$ and $u_m$, then there is $j$, such that $v_j\neq f^j(u_0)$. Since $v_j$ for $j=1,\cdots, m-1$ are non-preperipheral, $\tilde{v}_j$ are all non-trivial geodesics, which allows us to define configurations $(\tau_j', \Omega_j', \mathscr{U}_j')$ corresponding to $v_j$. Then the assumption that $v_j\neq f^j(u_0)$ implies that $\Omega_j'$ is not located at level $j$. By the argument of Theorem 1.2 of \cite{CZ10}, $\Omega_j$ lies above level $j$. From the same argument of Lemma \ref{L3.1}, we conclude that $d_{\mathcal{C}}(u_0,f^m(u_0))\geq m+1$. This leads to a contradiction, proving that the geodesic segment connecting $u_0$ and $u_m$ is unique.     \qed
\medskip

\noindent {\em Proof of Theorem $1.1$}: Assume that $\tilde{c}\in \mathscr{S}\backslash \mathscr{S}(2)$. Choose $\tilde{u}_0\in \mathcal{C}_0(\tilde{S})$ so that $i(\tilde{c},\tilde{u}_0)=1$. Let $u_0\in F_{\tilde{u}_0}$ be such that $\Omega_0\cap \mbox{axis}(g)\neq \emptyset$, where $g\in G$ satisfies the condition $g^*=f$ and $(\tau_0,\Omega_0,\mathscr{U}_0)$ be the configuration corresponding to $u_0$. By Theorem 1.2, for every $m\geq 1$, $[u_0, f(u_0), \cdots, f^m(u_0)]$ and $[u_0, f^{-1}(u_0), \cdots, f^{-m}(u_0)]$ are the unique geodesic segments connecting $u_0, u_m$, and $u_0, u_{-m}$, respectively. 

We claim that $L_m=[f^{-m}(u_0), \cdots, f^{-1}(u_0), u_0, f(u_0), \cdots, f^m(u_0)]$ is a geodesic segment connecting $f^{-m}(u_0)$ and $f^{m}(u_0)$. Otherwise, the triangle inequality yields that $d_{\mathcal{C}}(f^{-m}(u_0),f^m(u_0))< 2m$. If  $L_m$ is not a geodesic segment, then since $f^m$ acts on $\mathcal{C}(S)$ as an isometry with respect to the path metric $d_{\mathcal{C}}$, $f^m(L_m)=[u_0,\cdots, f^{2m}(u_0)]$ would not be a geodesic segment, which contradicts Theorem 1.2. 

We conclude that $L_m$ is a geodesic path connecting $u_{-m}$ and $u_m$ for all $m>0$. To see that $L_m$ is the only geodesic segment joining $u_{-m}$ and $u_m$, we suppose there are 
two different geodesic segments $L_m$ and $L_m'$ joining $u_{-m}$ and $u_m$. Then since $f^m$ is an isometry, $f^m(L_m)$ and $f^m(L_m')$ would be two different geodesic segments connecting $u_0$ and $u_{2m}$, and this would contradict the uniqueness part of Theorem 1.2.  

It is now clear that both $f^{-m}(u_0)$ and $f^{m}(u_0)$ tend to the boundary $\partial \mathcal{C}(S)$ as $m\rightarrow +\infty$, and 
$$
\mathscr{L}_{u_0}=[\cdots, f^{-m}(u_0), \cdots, f^{-1}(u_0), u_0, f(u_0), \cdots, f^m(u_0), \cdots ]
$$ 
is an invariant bi-infinite geodesic under the action of $f^j$ for any $j$. We then define the map $\mathscr{I}$ by sending $\tilde{u}_0$ to $\mathscr{L}_{u_0}$. 

Let $u_0'\in F_{\tilde{u}_0}$ be such that $u_0\neq u_0'$ and $\mbox{axis}(g)\cap \Omega_0'\neq \emptyset$. We have $\tilde{u}_0=\tilde{u}_0'$. Hence $\Omega_0'\in \mathscr{R}_{\tilde{u}_0}$. By assumption we have $\mbox{axis}(g)\cap \Omega_0'\neq \emptyset$. Therefore, there is $j\in \mathbf{Z}$ such that $\Omega_0'=g^j(\Omega_0)$. This shows that $\mathscr{L}_{u_0}=\mathscr{L}_{u_0'}$. Thus the map $\mathscr{I}$ is well defined. 

Assume that  $\tilde{u}_0, \tilde{v}_0\in \mathcal{C}_0(\tilde{S})$ be such that $\tilde{u}_0\neq \tilde{v}_0$ and $i(\tilde{c},\tilde{u}_0)=i(\tilde{c},\tilde{v}_0)=1$. The vertices $u_0$ and $v_0\in \mathcal{C}_0(S)$ are so chosen that satisfy 

 (i) $u_0\in F_{\tilde{u}_0}$, $v_0\in F_{\tilde{v}_0}$, and \\
\indent (ii) $\Omega_{u_0}\cap \mbox{axis}(g)\neq \emptyset$ and $\Omega_{v_0}\cap \mbox{axis}(g)\neq \emptyset$.

\noindent By Theorem 1.1, we assert that
$$
\mathscr{L}_{v_0}=[\cdots, f^{-m}(v_0), \cdots, f^{-1}(v_0), v_0, f(v_0), \cdots, f^m(v_0), \cdots ]
$$ 
is also an invariant bi-infinite geodesic under the action of $f^j$ for any $j$. 

To show that $\mathscr{I}$ is injective, i.e., $\mathscr{L}_{u_0}\neq \mathscr{L}_{v_0}$, we only need to show that $v_0$ is not a vertex in $\mathscr{L}_{u_0}$. Suppose that $v_0=f^i(u_0)$ for some $m\in \mathbf{Z}$. Then since $f\in \mathscr{F}$, it is isotopic to the identity on $\tilde{S}$ as $x$ is filled in. It follows that $v_0$ is freely homotopic to $u_0$ if $u_0$ and $v_0$ are both viewed as curves on $\tilde{S}$. That is, $\tilde{u}_0=\tilde{v}_0$. This contradicts that $\tilde{u}_0\neq \tilde{v}_0$.   

The argument above also shows that $\mathscr{L}_{u_0}$ and $\mathscr{L}_{v_0}$ are disjoint bi-infinite geodesics in $\mathcal{C}(S)$. 

Since $\mathscr{F}^*$ is isomorphic to the fundamental group $\pi_1(\tilde{S}, x)$; it does not contain any elliptic elements. Thus (1) in Theorem 1.1 is a special case of Lemma 2.1.   \qed

\end{document}